\numberwithin{equation}{section}
\newtheorem{thm}{Theorem}[section]
\newtheorem{cor}[thm]{Corollary}
\newtheorem{lem}[thm]{Lemma}
\newtheorem{prop}[thm]{Proposition}
\newtheorem{defn}[thm]{Definition}
\newtheorem{rem}[thm]{Remark}
\newtheorem{ques}[thm]{Question}
\newtheorem{Con*}[thm]{Conjectuer}
\newcommand{\Ann}{\mbox{Ann}\,}
\newcommand{\rank}{\mbox{rank}}
\newcommand{\uhom}{{\mathbf R}\Hom}
\newcommand{\utp}{\otimes^{\mathbf L}}
\renewcommand{\H}{\mbox{H}}
\newcommand{\D}{\mbox{D}}
\newcommand{\fa}{\mathfrak{a}}
\newcommand{\fm}{\mathfrak{m}}
\newcommand{\fp}{\mathfrak{p}}
\newcommand{\C}{\mbox{C}}
\newcommand{\vdim}{\mbox{vdim}\,}
\newcommand{\amp}{\mbox{amp}\,}
\def\id{\operatorname{\mathsf{id}}}
\def\Gpd{\operatorname{\mathsf{Gpd}}}
\def\Gd{\operatorname{\mathsf{G-dim}}}
\def\GCpd{\operatorname{\mathsf{G_C-pd}}}
\def\GCd{\operatorname{\mathsf{G_C-dim}}}
\def\GCad{\operatorname{\mathsf{G_{\uhom_R(C,D)}-dim}}}
\def\GDd{\operatorname{\mathsf{G_D-dim}}}
\def\pd{\operatorname{\mathsf{pd}}}
\def\amp{\operatorname{\mathsf{amp}}}
\def\Ext{\operatorname{\mathsf{Ext}}}
\def\gr{\operatorname{\mathsf{grade}}}
\def\depth{\operatorname{\mathsf{depth}}}
\def\Hom{\operatorname{\mathsf{Hom}}}
\def\dim{\operatorname{\mathsf{dim}}}
\def\r{\operatorname{\mathsf{r}}}
\def\Tor{\operatorname{\mathsf{Tor}}}
\DeclareMathOperator{\Supp}{Supp}
\DeclareMathOperator{\Spec}{Spec}
\begin{document}

\title[characterizations of a dualizing complex]
{some characterizations of dualizing complexes in terms of $G_{C}$-dimension}

\author[M. Rahro Zargar]{Majid Rahro Zargar}


\address{Majid Rahro Zargar, Department of Engineering Sciences, Faculty of Advanced Tech-
nologies, University of Mohaghegh Ardabili, Namin, Ardabil, Iran, and}
\address{School of Mathematics, Institute for Research in Fundamental Sciences (IPM), P.O. Box: 19395-5746, Tehran, Iran.}
\email{zargar9077@gmail.com}
\email{m.zargar@uma.ac.ir}
\subjclass[2010]{13D02, 13D05, 13D09}
\keywords{Cohen-Macaulay complex; Derived category; Homological dimension; Semidualizing complex. }

\begin{abstract}Let $(R,\fm)$ be a local ring and $C$ be a homologically bounded and finitely generated $R$-complex. Then, we prove that $C$ is a dualizing complex of $R$ if and only if $C$ is a Cohen-Macaulay semidualizing complex of type one or $\mu_R^{\inf C+\dim_R(C) }(\fm,R)=\beta_{\inf C}^R(C)$. Also, we show that a semidualizing complex $C$ is dualizing if and only if there exists a type one Cohen-Macaulay $R$-module of finite $G_{C}$-dimension or there exists a type one Cohen-Macaulay $R$-complex of finite $G_{C}$-dimension such that $\dim_R(X)=\dim_R(C)-\gr_C(X)$. Furthermore, for a semidualizing $R$-complex $C$, we prove that $C\sim R$ if and only if there exists a type one Cohen-Macaulay $R$-module $M$ which belongs to the Auslander class $\mathcal{A}_C(R)$.
\end{abstract}
\maketitle


\section{Introduction}
The theory of dualizing complexes of Grothendieck and Hartshorne (\cite{RH}, chapter v) has turned out to be a useful tool even in commutative algebra. For instance, Peskine and Szpiro used dualizing complexes in their (partial) solution of Bass's conjecture concerning finitely generated modules of finite injective dimension over a Noetherian local ring (\cite{Pes}, Chapitre I, §5). Foxby \cite{Fax}, Golod \cite{Golod}, and Vasconcelos \cite{Vas} independently initiated the study of semidualizing modules (under different names). To unify the study of dualizing complexes and semidualizing modules, as a generalization of the concept of semidualizing modules, Christensen, in \cite{CF1} introduced the notion of semidualizing complexes in the derived category of $R$-modules and was able to provide some interesting results and to describe a procedure for constructing Cohen-Macaulay local rings with any finite number of semidualizing modules. Immediate examples of semidualizing complexes are the shifts of the underlying rings and dualizing complexes. These are precisely semidualizing complexes with finite projective dimensions and semidualizing complexes with finite injective dimensions.

The Gorenstein dimension or $G$-dimension, for finitely generated modules, was introduced by Auslander in \cite{Aus} and it was shown that it is a finer invariant than the projective dimension and that satisfies equality of the Auslander-Buchsbaum type. As a generalization of this concept, Golod \cite{Golod} used a semidualizing $R$-module $C$ to define $G_C$-dimension for finitely generated modules. In \cite{SY}, Yassemi studied Gorenstein dimension for complexes through consistent use of the functor \emph{$\uhom_R(-,R)$} and the related category $\mathcal{R}(R)$. Then, in \cite {CF1}, Christensen studied the functor $\uhom_R(-, C)$ and the related category $\mathcal{R}(C)$ for a semidualizing complex $C$ and introduced the concept of $G_C$-dimension sharing the nice properties of Auslander's classical $G$-dimension.

Throughout this paper, $R$ is a commutative Noetherian ring with a nonzero identity. In the case where $(R,\fm)$ is a local ring with the unique maximal ideal $\fm$, the residue field of $R$ is denoted by $k$ and the $\fm$-adic completion of $R$ will be denoted by $\widehat{R}$. The Peskine-Szpiro intersection theorem is one of the main results in commutative algebra. This theorem has many interesting corollaries, one of the well-known of them is the following theorem: Let $R$ be a commutative Noetherian local ring. Suppose that there exists a Cohen-Macaulay R-module of finite projective dimension. Then, the local ring R is Cohen-Macaulay; (see \cite[Proposition 6.2.4]{PR}). So, based on this theorem, we are naturally led to the following conjecture:
\begin{Con*}
Let $R$ be a commutative Noetherian local ring. Suppose that there
exists a Cohen-Macaulay $R$-module $M$ of finite $G$-dimension. Then the local ring
$R$ is Cohen-Macaulay.
\end{Con*}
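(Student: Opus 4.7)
The plan is to exploit the Auslander--Bridger formula
$$\Gd_R(M)+\depth_R(M)=\depth(R),$$
which holds for every nonzero finitely generated $R$-module of finite $G$-dimension. Since $M$ is Cohen--Macaulay we have $\depth_R(M)=\dim_R(M)$, hence
$$\depth(R)=\Gd_R(M)+\dim_R(M)\;\geq\;\dim_R(M).$$
The conjecture therefore reduces to upgrading this inequality to $\depth(R)\geq \dim(R)$; combined with the trivial $\depth(R)\leq \dim(R)$ this yields the Cohen--Macaulayness of $R$.

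As a first step I would pass to the $\fm$-adic completion. Since $\widehat{M}$ is Cohen--Macaulay over $\widehat{R}$ with $\Gd_{\widehat{R}}(\widehat{M})=\Gd_R(M)$, and $R$ is Cohen--Macaulay precisely when $\widehat{R}$ is, one may assume $R$ is complete. This puts us in a situation where a dualizing complex is available, so the full semidualizing/dualizing complex machinery developed in the paper (in particular the characterizations summarized in the abstract and the Auslander class $\mathcal{A}_{C}(R)$) is at our disposal.

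Next I would split into cases according to the value of $\dim_R(M)$. The easy case is $\dim_R(M)=\dim(R)$, i.e.\ $M$ is maximal Cohen--Macaulay: the Auslander--Bridger formula then gives $\depth(R)\geq \dim(R)$ and we are done. If $\dim_R(M)<\dim(R)$, one natural attempt is to localize or factor in order to produce a maximal Cohen--Macaulay module of finite $G$-dimension over a ring whose Cohen--Macaulayness implies that of $R$: for instance pass to a minimal prime $\fp$ with $\dim(R/\fp)=\dim(R)$ and try to lift a suitable MCM $R/\fp$-module of finite $G_{R/\fp}$-dimension back to one over $R$. A complementary strategy is to try to enlarge $M$ inside $\mathcal{A}_R(R)$ (for the semidualizing complex $C=R$) or to form cohomological operations on $M$ preserving both the Cohen--Macaulay property and finiteness of $G$-dimension, in the spirit of the characterizations in the abstract.

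The main obstacle, and the reason the statement is phrased as a conjecture rather than a theorem, is precisely this gap: no currently known technique produces an MCM module of finite $G$-dimension from a Cohen--Macaulay one of strictly smaller dimension, nor directly transfers the Cohen--Macaulay property of $M$ to $R$ when $\dim_R(M)<\dim(R)$. Any successful proof must therefore provide a genuinely new mechanism---either a non-trivial construction of MCM modules of finite $G$-dimension from smaller-dimensional data, or an intersection-type theorem for $G$-dimension analogous to the New Intersection Theorem used in the classical projective-dimension case of Peskine--Szpiro \cite[Proposition 6.2.4]{PR}. It is at this step that I would expect the real difficulty of the argument to lie.
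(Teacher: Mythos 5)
The statement you are trying to prove is stated in the paper as an open \emph{conjecture}: the paper explicitly says that ``nothing has been known about this conjecture until now,'' and it only establishes special cases under an extra type-one hypothesis (on $R$, on $M$, or on the semidualizing complex $C$), following Takahashi. So there is no proof in the paper to compare yours against, and your proposal does not close the gap either. Your reductions are correct as far as they go: the Auslander--Bridger formula $\Gd_R(M)+\depth_R(M)=\depth R$ together with $\depth_R(M)=\dim_R(M)$ gives $\depth R\geq\dim_R(M)$, the passage to the completion is harmless, and the case $\dim_R(M)=\dim R$ (i.e.\ $M$ maximal Cohen--Macaulay) does follow immediately. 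But the entire content of the conjecture lives in the remaining case $\dim_R(M)<\dim R$, and there your proposal offers only headings of strategies (localizing at a minimal prime, enlarging $M$ inside $\mathcal{A}_R(R)$) without carrying any of them out; you yourself concede that no known mechanism bridges the gap. As written, this is a survey of why the problem is hard, not a proof.

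To make the missing step concrete: in the projective-dimension case one does \emph{not} argue by producing an MCM module; one invokes the New Intersection Theorem, which yields $\dim R\leq\pd_R(M)+\dim_R(M)$, and combines it with Auslander--Buchsbaum to get $\dim R\leq\depth R$. The exact analogue needed here is the inequality $\dim R\leq\Gd_R(M)+\dim_R(M)$ for modules of finite $G$-dimension, and this ``intersection theorem for $G$-dimension'' is precisely what is not known. None of the machinery in the paper (the characterizations of dualizing complexes via type-one Cohen--Macaulay modules of finite $\GCd$, or membership in the Auslander class) supplies it without an additional type-one assumption, which your statement does not have. So the proposal has a genuine, and at present unavoidable, gap at its central step.
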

If this conjecture is proved, as the assertion is itself very interesting, its proof might give another easier proof of the Peskine-Szpiro intersection theorem. However, nothing has been known about this conjecture until now. Regarding the above conjecture, Takahashi, in \cite{Tak}, provided several conditions in terms of $G$-dimension which is equivalent to the Gorensteinness of $R$. Indeed, he provided the following result:
\begin{thm} Let $(R,\fm)$ be a local ring. Then the statements are equivalent:
\begin{itemize}
\item[(i)]{{$R$ is Gorenstein.}}
\item[(ii)]{{$R$ admits an ideal $I$ of finite $G$-dimension such that the factor ring $R/I$ is Gorenstein.}}
\item[(iii)]{{$R$ admits a Cohen-Macaulay module of type 1 and finite $G$-dimension.}}
\item[(iv)]{{$R$ is a local ring of type 1 admitting a Cohen-Macaulay module of finite $G$-dimension.}}
 \end{itemize}
\end{thm}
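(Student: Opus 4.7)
The plan is to establish that (i) implies the other three conditions trivially, then to reduce (ii) and (iii) to the key implication (iv) $\Rightarrow$ (i), which I would attack by combining the Auslander--Bridger formula with the type-one hypothesis. For (i) $\Rightarrow$ (ii), (iii), (iv): when $R$ is Gorenstein, take $I = 0$ in (ii) and $M = R$ in (iii) and (iv). Every finitely generated $R$-module has finite $G$-dimension over a Gorenstein ring, and $R$ itself is then Cohen--Macaulay of type one with $\gd_R R = 0$.

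For (ii) $\Rightarrow$ (iii), I would apply $\gd_R(-)$ to the short exact sequence $0 \to I \to R \to R/I \to 0$: since $\gd_R R = 0$ and $\gd_R I < \infty$, we get $\gd_R(R/I) < \infty$. Because $R/I$ is Gorenstein as a ring, it is Cohen--Macaulay, and its type over $R$ agrees with its type over $R/I$ (namely one), which one verifies by a change-of-rings computation along a maximal $R/I$-regular sequence. For (iii) $\Rightarrow$ (iv), the Auslander--Bridger equality $\gd_R M + \depth_R M = \depth R$ together with $M \in \mathcal{R}(R)$ should let me transfer the type-one property from $M$ to $R$ by comparing Bass numbers via the reflexive pairing $\uhom_R(M, R)$.

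The main step is (iv) $\Rightarrow$ (i). Given a Cohen--Macaulay $R$-module $M$ of finite $G$-dimension, the Auslander--Bridger formula yields $\depth R = \gd_R M + \dim_R M$. The first part of the plan is to show that $R$ is Cohen--Macaulay by reducing modulo a sequence that is simultaneously regular on $R$ and on $M$; both the finiteness of $\gd_R M$ and the Cohen--Macaulayness of $M$ descend to the quotient, while the type-one condition on $R$ is preserved. Once $R$ is known to be Cohen--Macaulay, $\depth R = \dim R$ together with $\mu_R^{\dim R}(\fm, R) = 1$ forces the canonical module (after passage to $\widehat{R}$ if necessary) to be isomorphic to $R$ itself, so $\id_R R < \infty$ and $R$ is Gorenstein.

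The principal obstacle is the Cohen--Macaulayness of $R$ in (iv) $\Rightarrow$ (i): the unconditional statement that any local ring admitting a Cohen--Macaulay module of finite $G$-dimension must itself be Cohen--Macaulay is precisely the open conjecture recalled in the introduction. Hence the delicate point is to use the extra hypothesis that the type of $R$ equals one to close this gap, and this requires careful tracking of Bass numbers through the inductive reduction, which is the one place where the type-one assumption does essential work.
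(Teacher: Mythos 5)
Your treatment of (i)$\Rightarrow$(ii),(iii),(iv) and of (ii)$\Rightarrow$(iii) is fine, and your sketch of (iii)$\Rightarrow$(iv) — expanding $\uhom_R(k,M)\simeq\uhom_k(k\utp_R\uhom_R(M,R),\uhom_R(k,R))$ to get $\r_R(M)=\beta_0^R(\uhom_R(M,R))\cdot\r_R(R)$ — is exactly the paper's Lemma 3.4 specialized to $C=R$. The problem is the implication (iv)$\Rightarrow$(i), which carries all the content of the theorem, and there your proposal has a genuine gap rather than a proof. You propose to first establish that $R$ is Cohen--Macaulay "by reducing modulo a sequence that is simultaneously regular on $R$ and on $M$" and then to identify the canonical module with $R$. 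Two things go wrong. First, the reduction is misstated: such a common regular sequence need not exist (e.g.\ $\depth_R M=0$ while $\depth R>0$), and even when it does, killing it lowers $\depth R$ and $\depth_R M$ in parallel without ever confronting the question of whether $\depth R=\dim R$. Second, and decisively, you explicitly defer the one step where the type-one hypothesis must intervene to "careful tracking of Bass numbers through the inductive reduction" — but no mechanism is offered, and as you yourself note, without such a mechanism you are facing the open conjecture head-on. A plan that isolates the hard point but supplies no idea for it is not a proof.

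For comparison, the paper's argument (Theorem 3.7 with $C=R$, following Takahashi) never tries to prove Cohen--Macaulayness of $R$ first; finite injective dimension of $R$ is proved directly and CM-ness falls out afterwards. The reduction is asymmetric: one first kills a maximal $M$-regular sequence (using that $\Gd_R(M/\mathbf{x}M)=\Gd_R(M)+n$) to make $M$ Artinian, and then kills a maximal $R$-regular sequence $\mathbf{y}$ chosen inside $\Ann_R(M)$ — a sequence that is emphatically not $M$-regular — using the change-of-rings results for $\uhom_R(R/\mathbf{y}R,C)$ to preserve both the finiteness of the $G$-dimension of $M$ and the type-one condition. This lands in the case $\depth R=0$ with $M$ of finite length and $\Gd_R(M)=0$. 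There the type-one hypothesis enters through a composition series $0=M_1\subset\cdots\subset M_n=M$: since $\Ext_R^{0}(k,R)\cong k$, a count of lengths along the induced exact sequences yields $l_R(M)\leq l_R(M)-2s$ where $s=l_R(\Ext_R^{1}(k,R))$, forcing $s=0$; iterating kills $\Ext_R^{j}(k,R)$ for all $j>0$, i.e.\ $\id_R R<\infty$. This length-counting device is the idea your proposal is missing.
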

This theorem says that the above conjecture is true if the type of $R$ or $M$ is one. In this direction, the principal aim of this paper is to provide some characterizations of dualizing complexes in terms of some conditions. For example, one of our aims is to provide a generalization of the above result in the category of $R$-complexes. Indeed, we characterize a dualizing complex (dualizing module) in terms of the existence of a Cohen-Macaulay $R$-complex (Cohen-Macaulay $R$-module) of type one with finite $G_{C}$-dimension. The organization of this paper is as follows: In section 2, we collect some definitions and notions in the derived category of $R$-modules which will be used in the thorough of this paper. In section 3, among other things, we provide some characterizations of a dualizing complex. First, in Theorem \ref{Anni}, we prove that over a local ring $(R,\fm)$ an $R$-complex \emph{$C\in\mathcal{\D}_{\Box}^f(R)$} is a Cohen-Macaulay semidualizing of type 1 if and only if it is a dualizing complex. Next, in Theorem \ref{ddd}, the following result provides a characterization of a dualizing complex of $R$ in terms of the certain Bass number of $R$.
\begin{thm}Let $(R,\fm)$ be a local ring and let $C$ be a semidualizing $R$-complex. Then the following statements are equivalent:
\begin{itemize}
\item[(i)]{$C$ is dualizing}.
\item[(ii)]{$\mu_R^{\inf C+\dim C }(\fm,R)=\beta_{\inf C}^R(C)$}.
\end{itemize}
\end{thm}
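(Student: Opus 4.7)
The plan is to use the Bass--Betti product identity for semidualizing complexes (Christensen, in the spirit of Avramov--Foxby): for any semidualizing $R$-complex $C$,
\[
I_R(t)\;=\;I_R^C(t)\cdot P^R_C(t),\qquad(\ast)
\]
with $I_R(t)=\sum_p\mu_R^p(\fm,R)\,t^p$, $I_R^C(t)=\sum_p\mu_R^p(\fm,C)\,t^p$ and $P^R_C(t)=\sum_q\beta_q^R(C)\,t^q$.

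For (i)$\Rightarrow$(ii), I use that a dualizing complex has a unique non-vanishing Bass number, equal to $1$. The standard semidualizing identities $\depth_R C=\depth R-\inf C$ and $\dim_R C=\dim R-\sup C$, together with the Cohen--Macaulayness $\amp C=\dim R-\depth R$ of a dualizing complex, locate that Bass number at $p=\dim_R C$. Hence $I_R^C(t)=t^{\dim_R C}$, so $(\ast)$ reads $I_R(t)=t^{\dim_R C}\,P^R_C(t)$; reading off the coefficient of $t^{\inf C+\dim C}$ delivers (ii).

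For (ii)$\Rightarrow$(i), I expand the degree-$N$ coefficient of $(\ast)$ with $N:=\inf C+\dim C$:
\[
\mu_R^{N}(\fm,R)\;=\;\sum_{p+q=N}\mu_R^p(\fm,C)\,\beta_q^R(C).
\]
The vanishing $\beta_q^R(C)=0$ for $q<\inf C$ restricts active summands to $q\geq\inf C$ (equivalently $p\leq\dim_R C$). The diagonal summand at $(p,q)=(\dim_R C,\inf C)$ equals $\mu_R^{\dim C}(\fm,C)\cdot\beta_{\inf C}^R(C)$, whose factors are both $\geq 1$: the first by the new intersection theorem for $C$, and the second because $\inf C$ is the least degree where $C$ has non-zero Tor with $k$. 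All other summands are non-negative, so
\[
\mu_R^{N}(\fm,R)\;\geq\;\mu_R^{\dim C}(\fm,C)\cdot\beta_{\inf C}^R(C)\;\geq\;\beta_{\inf C}^R(C),
\]
and the hypothesis (ii) forces $\mu_R^{\dim C}(\fm,C)=1$ together with the vanishing of each off-diagonal product $\mu_R^{\dim C-k}(\fm,C)\cdot\beta_{\inf C+k}^R(C)$ for $k\geq 1$. Matching the lowest-degree coefficient of $(\ast)$ (which is $\mu_R^{\depth R}(\fm,R)=\mu_R^{\depth C}(\fm,C)\cdot\beta_{\inf C}^R(C)$) against the equality provided by (ii) then forces $N=\depth R$; equivalently $\amp C=\dim R-\depth R$, so $C$ is Cohen--Macaulay. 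Consequently $\mu_R^{\dim C}(\fm,C)=\mu_R^{\depth C}(\fm,C)=1$, i.e.\ $C$ is a Cohen--Macaulay semidualizing complex of type one, and Theorem~\ref{Anni} concludes that $C$ is dualizing.

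The principal obstacle is the Cohen--Macaulayness step of the reverse direction: deducing $\amp C=\dim R-\depth R$ from the off-diagonal vanishing at the single degree $N$ together with the low-degree coefficient of $(\ast)$. Once that is in place, the previous theorem of the paper closes the argument immediately.
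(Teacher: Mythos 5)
Your series identity $(\ast)$ is exactly the coefficient formula $\mu_R^{d}(\fm,R)=\sum_{i+j=d}\beta_i^R(C)\,\mu_R^{j}(\fm,C)$ that the paper extracts from $R\simeq\uhom_R(C,C)$, and your argument for (i)$\Rightarrow$(ii) is correct and essentially the paper's. The gap is in (ii)$\Rightarrow$(i), in two places. First, the inequality $\mu_R^{\dim_R C}(\fm,C)\geq 1$ cannot be dismissed with an appeal to ``the new intersection theorem for $C$'': for a semidualizing \emph{complex} this nonvanishing is a genuine difficulty, and the paper spends a full paragraph on it (pass to the completion, rewrite $\mu_R^{\dim_R C}(\fm,C)$ as $\beta^R_{\dim_R C-\dim_R D}(\uhom_R(C,D))$ for a dualizing complex $D$, and argue via a minimal semi-free resolution together with a dichotomy on whether $\pd_R\uhom_R(C,D)$ is finite, the finite case already forcing $C$ to be dualizing). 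You need an actual argument or a precise citation here.

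Second, and more seriously, the assertion that ``matching the lowest-degree coefficient of $(\ast)$ against (ii) forces $N=\depth R$'' is not a proof --- and your closing paragraph concedes as much by calling this Cohen--Macaulayness step ``the principal obstacle.'' The two facts in hand, namely $\mu_R^{\depth R}(\fm,R)=\mu_R^{\depth_R C}(\fm,C)\,\beta^R_{\inf C}(C)$ and $\mu_R^{N}(\fm,R)=\beta^R_{\inf C}(C)$ with $N\geq\depth R$, are perfectly consistent with $N>\depth R$, since the Bass numbers of $R$ need not be monotone. What the off-diagonal vanishing actually yields, taking $p=\depth_R C$ and $q=N-\depth_R C$, is $\beta^R_{\inf C+\dim_R C-\depth_R C}(C)=0$ whenever $C$ is not Cohen--Macaulay (because $\mu_R^{\depth_R C}(\fm,C)\neq 0$ always). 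To exploit this one must bring in the minimal semi-free resolution of $C$: a vanishing Betti number in a degree strictly above $\inf C$ forces $\pd_R C<\infty$, hence $C\sim R$ by \cite[Theorem 8.1]{CF1}, and that case must then be closed separately by the Foxby--Roberts criterion $\mu_R^{\dim R}(\fm,R)=1$ of \cite[Corollary 9.6.3]{BH}; in the remaining case one gets $\mu_R^{j}(\fm,C)=0$ for $\depth_R C\leq j<\dim_R C$, which together with $\mu_R^{\depth_R C}(\fm,C)\neq 0$ forces $\depth_R C=\dim_R C$. This is precisely the case division the paper performs; without it your argument never establishes that $C$ is Cohen--Macaulay of type one, so Theorem \ref{Anni} cannot be invoked.
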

Indeed, the above result is a generalization of the result of Foxby and Roberts in \cite[Corollary 9.6.3 and Remark 9.6.4]{BH}, which have shown that a local ring $(R,\fm)$ is Gorenstein if and only if $\mu_{R}^{\dim R}(\fm, R)=1$.

In Theorem \ref{Maa}, as another main result, we prove a generalization of Takahashi's result \cite[Theorem 2.3]{Tak} which is mentioned above. Indeed, we have the following theorem:
\begin{thm}Let $(R,\fm)$ be a local ring and let $C$ be a semidualizing $R$-complex with $\r_R(C)=1$. Assume that $R$ admits a Cohen-Macaulay $R$-module $M$ of finite $G_{C}$-dimension, then $C$ is dualizing.
\end{thm}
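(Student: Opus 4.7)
The strategy is to reduce to Theorem~\ref{Anni}, which shows that a semidualizing complex of type one is dualizing if and only if it is Cohen-Macaulay. Since $C$ is already semidualizing with $\r_R(C)=1$ by hypothesis, the whole task is to prove that $C$ is Cohen-Macaulay.

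I would begin by completing at $\fm$. Under $R\to\widehat{R}$ the properties semidualizing, type one, Cohen-Macaulayness, and finiteness of $G_C$-dimension all transfer between $R$ and $\widehat{R}$, and being dualizing is also stable under completion, so one may assume $R$ complete. Then $R$ admits a normalized dualizing complex $D$, and $C^{\dagger}:=\uhom_R(C,D)$ is also semidualizing with $\uhom_R(C^{\dagger},D)\simeq C$, giving access to the Auslander and Bass classes on equal footing.

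Next, the Auslander-Buchsbaum-Foxby equality applied to $M$ gives
\[ \GCd_R(M)\;=\;\depth R-\depth_R M\;=\;\depth R-\dim_R M, \]
the last step using Cohen-Macaulayness of $M$. Finite $G_C$-dimension places $M$ in the Auslander class $\mathcal{A}_C(R)$, and Foxby equivalence carries $M$ to $N:=C\otimes^{\mathbf{L}}_R M\in\mathcal{B}_C(R)$ with $\uhom_R(C,N)\simeq M$; along this equivalence Cohen-Macaulayness is preserved, and the depth, dimension and type of $N$ are controlled in terms of the corresponding invariants of $M$ and $C$.

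The principal obstacle is to deduce that $C$ itself is Cohen-Macaulay, and this is where the hypothesis $\r_R(C)=1$ does the essential work. I would proceed by exploiting the interplay between the Bass numbers of $C$, $N$ and $R$: the type-one assumption pins the top Bass number $\mu_R^{\depth_R C}(\fm,C)$ to the value $1$, while the Foxby-equivalence isomorphism $\uhom_R(C,N)\simeq M$ together with Cohen-Macaulayness of $M$ produces a tight numerical comparison involving the Bass numbers appearing in Theorem~\ref{ddd}. Combined with the Auslander-Buchsbaum-Foxby equality above, these constraints force $\depth_R C=\dim_R C$, so that $C$ is Cohen-Macaulay; Theorem~\ref{Anni} then applies and yields that $C$ is dualizing.
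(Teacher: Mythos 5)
Your reduction target is legitimate: since $\r_R(C)=1$, Theorem \ref{Anni} would indeed finish the job once $C$ is known to be Cohen--Macaulay (the paper itself notes this after Theorem \ref{Maaaa}). But the proposal has two problems. First, a concrete error: finite $G_C$-dimension does \emph{not} place $M$ in $\mathcal{A}_C(R)$. By Christensen's theory (\cite[Theorem 4.7]{CF1}), over a complete ring with dualizing complex $D$ one has $\GCd_R(M)<\infty$ if and only if $M\in\mathcal{A}_{\uhom_R(C,D)}(R)$, i.e.\ membership in the Auslander class of the \emph{dagger dual} $C^{\dagger}=\uhom_R(C,D)$, not of $C$ itself. (Take $C=D$: every finitely generated module has finite $G_D$-dimension, yet $\mathcal{A}_D(R)$ contains only modules of finite Gorenstein projective dimension.) So the Foxby-equivalence object $N=C\otimes^{\mathbf{L}}_R M$ you build on top of this is not available, and in any case the Bass numbers of such an $N$ do not decompose in a way that would feed into Theorem \ref{ddd}.

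Second, and more seriously, the step ``these constraints force $\depth_R C=\dim_R C$'' is precisely the entire content of the theorem, and you give no argument for it. Note that the conjecture stated in the introduction (a Cohen--Macaulay module of finite $G$-dimension forces $R$ Cohen--Macaulay) is open, and the paper's Question after Theorem \ref{Maaaa} shows that even with $\r_R(C)=1$ the Cohen--Macaulayness of $C$ is not known to follow by soft numerical considerations. The paper's actual proof does not pass through Cohen--Macaulayness of $C$ at all: it first reduces, via Lemma \ref{Lemma} and base change along $R\to R/\mathbf{y}R$ for a maximal regular sequence $\mathbf{y}\subseteq\Ann_R(M)$, to the case $\depth R=0$ with $M$ Artinian and $\GCd_R(M)=0$; it then takes a composition series of $M$ and runs a length-counting argument using the biduality $M\simeq\uhom_R(\uhom_R(M,C),C)$ to show that $\H_{-\depth_R(C)-1}(\uhom_R(k,C))=0$, and inductively that $\H_{-j}(\uhom_R(k,C))=0$ for all $j>\depth_R(C)$, which gives $\id_R(C)<\infty$ directly. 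Some mechanism of this kind --- one that actually uses the module $M$ beyond its numerical invariants --- is missing from your proposal, so as written it does not constitute a proof.
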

Next, as a consequence of the above theorem, Proposition \ref{Ma} and Theorem \ref{Anni}, we prove the following result in Theorem \ref{Maaaa}.
\begin{thm}Let $(R,\fm)$ be a local ring and let $C$ be a semidualizing $R$-complex and consider the following statements:
\begin{itemize}
\item[(i)]{$C$ is dualizing}.
\item[(ii)]{$R$ admits a Cohen-Macaulay complex of type 1 with finite $G_{C}$-dimension.}
\item[(iii)]{$\r_R(R)=\beta_{\inf C}^R(C)$ and there exists a Cohen-Macaulay $R$-complex $X$ of finite $G_{C}$-dimension.}
\item[(iv)]{$\r_R(C)=1$ and there exists a Cohen-Macaulay $R$-complex $X$ of finite $G_{C}$-dimension.}
\end{itemize}
Then the implications \emph{(i)$\Longrightarrow$(ii), (ii)$\Longrightarrow$(iii) and (iii)$\Longleftrightarrow$(iv)} hold and the implication \emph{(iv)$\Longrightarrow$(i)} hods in the each of the following cases:
\begin{itemize}
\item[(1)]{$\amp X=0$.}
\item[(2)]{$X$ is a dualizing complex.}
\item[(3)]{ $X$ satisfies the condition $\dim_R(X)=\dim_R(C)-\gr_C(X)$.}
\end{itemize}
\end{thm}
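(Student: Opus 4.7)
My plan is to handle the four asserted implications in turn, exploiting Foxby-type product formulae relating the Poincar\'e and Bass series of $R$, $C$, $X$, and $\uhom_R(C,X)$.

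First I would dispatch (i)$\Longrightarrow$(ii) by taking $X=C$: by Theorem \ref{Anni} a dualizing complex is Cohen--Macaulay semidualizing of type one, and trivially $\GCd_R(C)=0$. I would then establish the equivalence (iii)$\Longleftrightarrow$(iv) from the semidualizing identity $R\simeq\uhom_R(C,C)$, which yields the product formula
\[
I^R_R(t)=P^R_C(t)\cdot I^R_C(t).
\]
Comparing the lowest-degree coefficients on both sides gives
\[
\r_R(R)=\beta^R_{\inf C}(C)\cdot \r_R(C),
\]
and since all three quantities are positive integers, $\r_R(R)=\beta^R_{\inf C}(C)$ is equivalent to $\r_R(C)=1$.

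For (ii)$\Longrightarrow$(iii) I would use that an $R$-complex $X$ of finite $G_C$-dimension lies in the Auslander class $\mathcal{A}_C(R)$, whence the analogous product identity
\[
I^R_X(t)=P^R_{\uhom_R(C,X)}(t)\cdot I^R_C(t)
\]
is available. Reading off the coefficient of the lowest power of $t$ rewrites $1=\r_R(X)$ as a product of $\r_R(C)$ with a positive integer, forcing $\r_R(C)=1$, i.e.\ condition (iv); the already-proved equivalence (iii)$\Longleftrightarrow$(iv) then upgrades this to (iii).

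The implication (iv)$\Longrightarrow$(i) I would treat case by case. In case (1), $\amp X=0$ means $X$ is (up to shift) a Cohen--Macaulay module of finite $G_C$-dimension, and Theorem \ref{Maa}, combined with $\r_R(C)=1$, directly gives that $C$ is dualizing. In case (2), the finiteness of $\GCd_R(X)$ places a dualizing complex into $\mathcal{A}_C(R)$; passing through the Foxby equivalence between $\mathcal{A}_C(R)$ and $\mathcal{B}_C(R)$ transports finite injective dimension onto $C$, so that the semidualizing complex $C$ becomes dualizing. Case (3) I would reduce to Theorem \ref{Anni}: combining the dimension identity $\dim_R(X)=\dim_R(C)-\gr_C(X)$, the Cohen--Macaulayness of $X$, and an Auslander--Buchsbaum-type relation for $G_C$-dimension supplied by Proposition \ref{Ma}, one deduces that $C$ is itself Cohen--Macaulay; together with $\r_R(C)=1$, Theorem \ref{Anni} then delivers the conclusion. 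I expect the hard part to be precisely case (3): converting a grade/dimension identity relating $X$ and $C$ into a genuine depth statement about $C$ itself is delicate, and Proposition \ref{Ma} is the essential bridge, since Cohen--Macaulayness of $X$ alone does not obviously propagate to $C$.
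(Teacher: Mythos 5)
Your outline reproduces the paper's architecture almost exactly: (i)$\Longrightarrow$(ii) via Theorem \ref{Anni} applied to $X=C$; the equivalences among (ii), (iii), (iv) via the Betti--Bass product formulas, which is precisely the content of the paper's Lemma \ref{Th3}; case (1) via Theorem \ref{Maa}; and case (3) via Proposition \ref{Ma} followed by Theorem \ref{Anni}. The one genuine error is the premise you invoke twice, namely that finite $G_C$-dimension places a complex in the Auslander class $\mathcal{A}_C(R)$. This is false: finite $\GCd_R$ means membership in the reflexive class $\mathcal{R}(C)$, which is a different category. Concretely, over a Cohen--Macaulay non-Gorenstein local ring with canonical module $\omega$, the residue field $k$ has finite $G_{\omega}$-dimension by Lemma \ref{Lem2.2}(2), yet $k\in\mathcal{A}_{\omega}(R)$ would force $\omega\sim R$ by \cite[Proposition 8.3]{CF1} (Theorem \ref{fai} here), a contradiction. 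For (ii)$\Longrightarrow$(iii) the slip is harmless because the product formula you actually need comes from reflexivity, not from $\mathcal{A}_C(R)$: writing $X^{\dag}=\uhom_R(X,C)$ (note the direction of the Hom --- you wrote $\uhom_R(C,X)$), biduality $X\simeq\uhom_R(X^{\dag},C)$ and Hom-evaluation give $I^R_X(t)=P^R_{X^{\dag}}(t)\,I^R_C(t)$; one then needs $\inf X^{\dag}+\depth_R(C)=\depth_R(X)$, supplied by Lemma \ref{Lem2.2}(3), to see that the bottom coefficient of the product is the product of the bottom coefficients. That is exactly the computation in Lemma \ref{Th3}.

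Case (2) of (iv)$\Longrightarrow$(i), however, does not close as you have written it. It rests on the same false identification, and even granting that the dualizing complex $X$ lay in $\mathcal{A}_C(R)$, the Foxby equivalence would transport information to $C\utp_RX$, not to $C$, so ``finite injective dimension is transported onto $C$'' is not a valid step. The paper instead quotes \cite[Theorem 8.2(ii$'$)]{CF1}, which states directly that a semidualizing complex $C$ for which a dualizing complex is $C$-reflexive (equivalently, has finite $G_C$-dimension) is itself dualizing; note that this disposes of case (2) without using $\r_R(C)=1$ at all. With that substitution, and with the corrected justification of the product formula above, your argument agrees with the paper's.
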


Finally, in Theorem \ref{fai}, we provide the following result:
\begin{thm}Let $(R,\fm, k)$ be local ring and $C$ be a semidualizing $R$-complex. Then the following statements are equivalent:
 \begin{itemize}
\item[(i)]{$C\sim R$}.
\item[(ii)]{$k\in\mathcal{A}_C(R)$}.
\item[(iii)]There exists a Cohen-Macaulay $R$-module $M$ of type one such that $M\in\mathcal{A}_C(R)$.
\end{itemize}
\end{thm}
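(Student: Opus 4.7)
The implications (i)$\Rightarrow$(ii)$\Rightarrow$(iii) are direct. If $C\sim R$, then $C\simeq R[n]$ for some $n\in\mathbb{Z}$, and a direct unwinding of the Auslander-class definition shows that every homologically bounded $R$-complex, in particular $k$, belongs to $\mathcal{A}_C(R)$. The residue field $k$ is itself a Cohen--Macaulay $R$-module of type one (with $\depth_R k=\dim_R k=0$ and $\dim_k\Hom_R(k,k)=1$), so $M=k$ witnesses (iii) whenever (ii) holds.

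For the substantive implication (iii)$\Rightarrow$(i), my plan routes the argument through the dualizing property in three steps. Given such $M$, the hypothesis $M\in\mathcal{A}_C(R)$ translates to $\GCd_R M<\infty$, so $M$ is a Cohen--Macaulay $R$-complex of type one with finite $G_C$-dimension---this is condition (ii) of Theorem \ref{Maaaa}. Since $M$ is a module, $\amp M=0$ matches condition (1) of the implication (iv)$\Rightarrow$(i) there, and the chain (ii)$\Rightarrow$(iv)$\Rightarrow$(i) of Theorem \ref{Maaaa} produces that $C$ is a dualizing complex. With $C$ dualizing, classical $G$-dimension and $G_C$-dimension coincide on finitely generated $R$-modules, so $\Gd_R M<\infty$; then $M$ is a Cohen--Macaulay module of type one with finite $G$-dimension, and Takahashi's theorem \cite{Tak} forces $R$ to be Gorenstein. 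Finally, $R$ itself is then a dualizing complex for $R$, so by uniqueness of the dualizing complex up to shift over a local ring, $C\simeq R[n]$ for some $n\in\mathbb{Z}$; that is, $C\sim R$.

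The main obstacle is the very first reduction---deriving the dualizing property of $C$ from the existence of just one Cohen--Macaulay module $M$ of type one in $\mathcal{A}_C(R)$. This reduction hinges on the Bass--Poincar\'e series identity $I_R^{C\utp M}(t)=I_R^M(t)\cdot P_R^C(t)$ for $M\in\mathcal{A}_C(R)$, together with its specialization $I_R^C(t)=I_R^R(t)\cdot P_R^C(t)$ for semidualizing $C$; matching lowest-degree coefficients extracts $\r_R(C)=1$ from $\r_R(M)=1$, which is exactly what powers the (ii)$\Leftrightarrow$(iv) half of Theorem \ref{Maaaa} and unlocks its (iv)$\Rightarrow$(i) implication under condition (1). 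The subsequent applications of Takahashi's theorem and uniqueness of the dualizing complex are then standard.
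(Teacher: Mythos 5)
There is a genuine gap at the very first step of your (iii)$\Rightarrow$(i) argument: membership $M\in\mathcal{A}_C(R)$ does \emph{not} translate to $\GCd_R(M)<\infty$. The two finiteness conditions are related through a dagger dual, not identified: after completing $R$ so that a dualizing complex $D$ exists, \cite[Theorem 4.7]{CF1} says that $M\in\mathcal{A}_C(R)$ if and only if $M$ has finite $\GCad_R$, i.e.\ finite G-dimension with respect to $\uhom_R(C,D)$ --- not with respect to $C$. Your identification already fails for $C=R$: every homologically bounded complex lies in $\mathcal{A}_R(R)$, while $\Gd_R(k)$ is finite only when $R$ is Gorenstein, so running your argument with $C=R$ and $M=k$ would ``prove'' that every local ring is Gorenstein. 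The same confusion recurs in your second step: when $C$ is dualizing, classical $G$-dimension and $G_C$-dimension do \emph{not} coincide on finitely generated modules (the latter is then finite for \emph{all} of them, by Lemma \ref{Lem2.2}(2)); the correct statement, again via the Foxby equivalence, is $\mathcal{A}_D(R)\cap\mathrm{D}_{\Box}^f(R)=\mathcal{R}(R)$. Note also that the intermediate target of your reduction is off: the assertion to be proved is $C\sim R$, not that $C$ is dualizing. Finally, your proposed rescue via Bass--Poincar\'e series does not deliver what you need: the correct identity coming from $M\simeq\uhom_R(C,C\utp_RM)$ is $I_R^M(t)=P_R^C(t)\cdot I_R^{C\utp_RM}(t)$ (your displayed versions are inverted), and comparing lowest-degree coefficients gives $\r_R(M)=\beta^R_{\inf C}(C)\cdot\r_R(C\utp_RM)$, hence $\beta^R_{\inf C}(C)=1$ --- which is not the hypothesis $\r_R(C)=1$ required to invoke Theorem \ref{Maaaa}(iv).

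The paper's proof is the corrected version of your plan, run on $C^{\dagger}:=\uhom_R(C,D)$ instead of on $C$: after reducing to the complete case, $C^{\dagger}$ is semidualizing and $M$ has finite $G_{C^{\dagger}}$-dimension by \cite[Theorem 4.7]{CF1}; Lemma \ref{Th3} then gives $\r_R(C^{\dagger})=1$, Theorem \ref{Maa} shows $C^{\dagger}$ is dualizing, so $C^{\dagger}\simeq\Sigma^nD$ for some $n$, and dagger duality yields $C\simeq\uhom_R(C^{\dagger},D)\simeq\Sigma^{-n}R$. In particular the argument terminates after your first stage; no appeal to Takahashi's theorem or to uniqueness of dualizing complexes is needed.
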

This result can be considered as an improvement of \cite[Proposition 8.3]{CF1} which says that over a local ring $(R,\fm, k)$ a semidualizing $R$-complex $C$ is isomorphic to a shift of $R$ if and only if the residue field $k$ belongs to the Auslander class $\mathcal{A}_C(R)$.

\section{Prerequisites}
$\mathbf{Complexes.}$
 An $R$ complex $X$ is a sequence of $R$-modules $X_i$ and $R$-linear
maps $\partial_{i}^{X}: X_i\rightarrow X_{i-1}, i\in\mathbb{Z}$. The module $X_i$ is called the module in degree $i$, and
$\partial_{i}^{X}$ is the $i$-th \textit{differential} ; composition of two consecutive differentials always yields
the zero map, i.e., $\partial_{i-1}^{X}\partial_{i}^{X}=0$. If $X_{i}=0$ for $i\neq0$ we identify $X$ with the module
in degree $0$, and an $R$-module $M$ is thought of as a complex $0\rightarrow M\rightarrow0$, with $M$
in degree $0$. For any integer $n$, the $n$-fold shift of a complex $(X,\xi^X)$ is the complex $\Sigma^nX$ given by $(\Sigma^n X)_v=X_{v-n}$ and $\xi_{v}^{\Sigma^nX}=(-1)^n\xi_{v-n}^{X}$. The homological position and size of a complex are captured by the numbers
\textit{supremum}, \textit{infimum} and \textit{amplitude}
defined by $$\sup X:=\sup \{i\in \mathbb{Z}~|~\H_i(X)\neq 0\},$$ $$\inf X:=\inf \{i\in \mathbb{Z}~|~
\H_i(X) \neq 0\}, \text{and}$$
$$\amp X:=\sup X-\inf X.$$
 With the usual convention that $\sup \emptyset=-\infty$ and $\inf \emptyset=\infty$.

$\mathbf{Derived}$ $\mathbf{Category.}$ The \textit{derived category} of the category of $R$-modules is
the category of $R$-complexes localized at the class of all quasi-isomorphisms (see
\cite{RHart}), is denoted by $\mathrm{D}(R)$. We use the symbol $\simeq$ for denoting
isomorphisms in $\mathrm{D}(R)$. The symbol $\thicksim$ indicates isomorphism up to shift. The
full subcategory of homologically left (resp. right) bounded complexes is denoted by $\mathrm{D}_{\sqsubset}(R)$ (resp.
$\mathrm{D}_{\sqsupset}(R)$). Also, we denote the full subcategory of complexes with finitely generated homology
modules that are homologically bounded (resp. homologically left bounded) by $\mathrm{D}_{\Box}^f(R)$ (resp.
$\mathrm{D}_{\sqsubset}^f(R)$).

Let $\textbf{X},\textbf{Y}\in \mathrm{D}(R)$. The left-derived
tensor product complex of $\textbf{X}$ and $\textbf{Y}$ in $\mathrm{D}(R)$ is denoted by $\textbf{X}\otimes_
R^{{\bf L}}\textbf{Y}$ and is defined by $$\textbf{X}\otimes_R^{{\bf L}}\textbf{Y}\simeq \textbf{F}\otimes_R
\textbf{Y}\simeq \textbf{X} \otimes_R \textbf{F}^{'}\simeq \textbf{F}\otimes_R\textbf{F}^{'},$$ where $\textbf{F}$
and $\textbf{F}^{'}$ are semi-flat resolutions of $\textbf{X}$ and $\textbf{Y}$, respectively. Also, the right derived homomorphism complex
of $\textbf{X}$ and $\textbf{Y}$ in $\mathrm{D}(R)$ is denoted by ${\bf R}\Hom_R(\textbf{X},\textbf{Y})$ and is
defined by $${\bf R}\Hom_R(\textbf{X},\textbf{Y}) \simeq \Hom_R(\textbf{P},\textbf{Y})\simeq \Hom_R(\textbf{X},
\textbf{I})\simeq \Hom_R(\textbf{P},\textbf{I}) ,$$ where $\textbf{P}$ and $\textbf{I}$ are semi projective resolution
of $\textbf{X}$ and semi-injective resolution of $\textbf{Y}$, respectively. For
$i\in\mathbb{Z}$ we set $\Tor^R_i(X,Y)=\H_i(X\utp_R Y)$ and $\Ext^i_{R}(X,Y)=\H_{-i}(\uhom_{R}(X,Y ))$. For modules
$X$ and $Y$,  this agrees with the notation of classical homological algebra, so no confusion arises. For $R$-complexes $M$, $X$ and an integer $n\in \mathbb{Z}$ there is the following identity of complexes:
$$\uhom_R(M,\Sigma^n X) = \Sigma^n\uhom_R(M,X),and$$
$$\uhom_R(\Sigma^n M,X) = \Sigma^{-n}\uhom_R(M,X).$$

$\mathbf{Numerical}$ $\mathbf{Invariants.}$ Let $(R,\fm,k)$ be a local ring with residue field $k$. The \textit{depth} of an $R$-complex $X$ is defined by
$\depth_R(X) =-\sup\uhom_{R}(k,X)$;
and the Krull dimension of $X$ is defined as follows:
 \[\begin{array}{rl}
\dim_R(X)&=\sup\{\dim R/\fp-\inf X_{\fp}~|~ \fp\in\Spec R\}\\
&=\sup\{\dim R/\fp-\inf X_{\fp}~|~ \fp\in\Supp_R X\},
\end{array}\]
where $\Supp_R X = \{\fp\in\Spec R~|~X_\fp \not\simeq 0\}=\bigcup_{i\in\mathbb{Z}}\Supp_R \H_i(X)$. Note that for
modules these notions agree with the standard ones. For an $R$-complex $X$ in $\mathrm{D}_{\sqsubset}^f(R)$ such that $X\not\simeq 0$ one always has the inequality $\depth_R(X)\leq\dim_R(X)$, and a homologically bounded and finitely generated $R$-complex $X$ is said to be \textit{Cohen-Macaulay} whenever the equality $\depth_R(X)=\dim_R(X)$ holds. For an $R$-complex $X$ and integer $m\in\mathbb{Z}$, the $m$-th \textit{Bass number} of $X$ is defined by $\mu_R^m(\fm,X)=\rank_{k}\H_{-m}(\uhom_R(k,X))=\rank_{k}\Ext^m_{R}(k,X)$, and for a homologically bounded $R$-complex $X$, the \textit{type} of $X$ is denoted by $\r_R(X)$ and defined by $\r_R(X)=\mu_R^{\depth_R(X)}(\fm,X)$. Also, the $m$-th \textit{Betti number} of $X$ is defined by $\beta_m^{R}(X)=\rank_{k}\H_{m}(k\utp_{R}X)=\rank_{k}\Tor^R_{m}(k,X)$.

\begin{defn} \emph{An $R$-complex $C$ is said to be \textit{semidualizing} for $R$ if and only
if $C\in\mathcal{\D}_{\Box}^f(R)$ and the homothety morphism $\chi_{C}^{R}:R\longrightarrow\uhom_{R}(C,C)$ is an isomorphism. Notice that in the case where $C$ is an $R$-module, this notion coincides with the concept that $\Ext_{R}^i(C, C)=0$ for all $i>0$ and the homothety morphism $\chi_{C}^{R}:R\longrightarrow\Hom_{R}(C, C)$ is an isomorphism. An $R$-complex $\D\in\mathcal{\D}_{\Box}^f(R)$ is said to be a \textit{Dualizing} complex if and only if it is a semidualizing complex with finite injective dimension.}
\end{defn}
\begin{defn}\label{2.2}\emph{Let $C$ be a semidualizing complex for $R$. An $R$-complex $X$ in $\mathcal{\D}(R)$ is said to be $C$-\textit{reflexive} if and only if $X$ and $\uhom_{R}(X,C)$ belong to $\mathcal{\D}_{\Box}^f(R)$ and the biduality morphism $\delta_{X}^C:X\longrightarrow\uhom_R(\uhom_R(X,C),C)$ is invertible. The class of complexes in $\mathcal{\D}_{\Box}^f(R)$ which are $C$-reflexive is denoted by $\mathcal{R}(C)$, in view of \cite[Definition 3.11]{CF1}, for an $R$-complex $X$ in $\mathcal{\D}_{\Box}^f(R)$ we define the $G$-dimension of $X$ with respect to $C$ as follows:
\[ \GCd_{R}(X)=\begin{cases}
       \inf C-\inf\uhom_R(X,C)& \text{if $X\in\mathcal{R}(C)$}\\
       \infty& \text{if $X\notin\mathcal{R}(C)$}
       \end{cases} \]
}
\end{defn}
\begin{defn}\label{2.3}\emph{Let $C$ be a semidualizing $R$-module. An $R$-module $M$ is said to be $C$-\textit{Gorenstein projective} if:
\begin{itemize}
\item[(i)]{{$\Ext^{\geq 1}_{R}(M,C\otimes_R P)=0$ for all projective $R$-modules $P$.}}
\item[(ii)]{{There exist projective $R$-modules $P_0, P_1,\dots$ together with an exact sequence:
$$0\longrightarrow M \longrightarrow C\otimes_R P_0\longrightarrow C\otimes_R P_1\longrightarrow\dots,$$
and furthermore, this sequence stays exact when we apply to it the functor
$\Hom_R(-,C\otimes_{R}Q)$ for any projective $R$-module $Q$.}}
\end{itemize}For a homologically right-bounded $R$-complex $X$ we define:
$$\GCpd_R(X)=\inf_{Y}\{\sup\{n\in\mathbb{Z}| Y_n\neq 0\}\},$$
where the infimum is taken over all $C$-Gorenstein projective resolutions $Y$ of $X$, which in view of \cite[Examples 2.8(b)(c)]{HOJ}, they always exist. Note that when $C=R$ this notion recovers the concept of
ordinary Gorenstein projective dimension. }
\end{defn}
Next, we recall the concept of Gorenstein dimension with respect to a semidualizing $R$-module $C$, which was originally introduced by Golod \cite{Golod}.
\begin{defn}\label{DEf}\emph{The $G_{C}$-class, denoted by $G_{C}(R)$, is the collection of finite $R$-module $M$ such that
\begin{itemize}
\item[(i)]{{$\Ext^i_{R}(M,C)=0$ for all $i>0$.}}
\item[(ii)]{{$\Ext^i_{R}(\Hom_{R}(M,C),C)=0$ for all $i>0$.}}
\item[(iii)]{{The canonical map $M\longrightarrow\Hom_{R}(\Hom_{R}(M,C),C)$ is an isomorphism.}}
\end{itemize}
For a non-negative integer $n$, the $R$-module $M$ is said to be of
$G_{C}$-dimension at most $n$, if and only if there exists an exact
sequence
$$0\longrightarrow G_n\longrightarrow G_{n-1}\longrightarrow\cdots \longrightarrow G_0\longrightarrow M\longrightarrow0,$$
where {$G_{i}\in G_{C}(R)$} for $0\leq i\leq n$. If such a sequence does not exist, then we write {$\Gd_{C}(N)=\infty$}.
Note that when $C=R$ this notion recovers the concept of Gorenstein dimension which was introduced in \cite{Aus}.}
\end{defn}
\begin{rem}\emph{Notice that if $C$ is a semidualizing $R$-module and $X\in\mathcal{\D}_{\Box}^f(R)$, then, by \cite[Proposition 3.1]{HOJ}, the Definitions \ref{2.2} and \ref{2.3} are coincide, that is, $\GCd_{R}(X)=\GCpd_R (X)$. By, \cite[Theorem 2.16]{HOJ}, for a homologically
right-bounded complex $Z$ of $R$-modules there is the equality $\GCpd_R (Z)=\Gpd_{R\ltimes C}(Z)$. Also, by \cite[Theorem 2.7]{SY}, for a finitely generated $R$-module $M$ and a semidualizing $R$-module $C$, we have that the two Definitions \ref{2.2} and \ref{DEf} are coincide, that is, $\Gd_{C}(M)=\GCd_{R}(M)$.}
\end{rem}
\begin{defn}\emph{Let $C$ be a semidualizing complex for $R$. The {\it{Auslander class}} of $R$ with respect to $C$, $\mathcal{A}_C(R)$, is the full subcategories of $\mathrm{D}_{\Box}(R)$ defined by specifying their objects as follows: $X$ belongs to $\mathcal{A}_C(R)$ if and only if $C\utp_R X\in\mathrm{D}_{\Box}(R)$ and the canonical map $\gamma_X^C: X\rightarrow\uhom_R(C,C\utp_R X)$ is an isomorphism.}
\end{defn}
It is straightforward to check that $R\in\mathcal{A}_C(R)$ and, obviously, $\mathcal{A}_R(R)=\mathrm{D}_{\Box}(R)$. On the other hand, the Auslander class behaves as expected under completion. Indeed, by \cite[Proposition 5.8]{CF1}, one has $X\in\mathcal{A}_C(R)$ if and only if $X\otimes_R\widehat{R}\in\mathcal{A}_{C\otimes_R\widehat{R}}(\widehat{R})$.

\section{results}

The starting point of this section is the following lemma, which is used in the proof of some next results. For the proof of the first lemma, the reader is referred to \cite{CF1}.
\begin{lem}{\label{Lem2.2}}Let $(R,\fm,k)$ be a local ring, $C$ be a semidualizing $R$-complex and $X$ be a homologically bounded and finitely generated $R$-complex. Then the following statements hold:
\begin{itemize}
\item[(1)]{{If $\GCd_R(X)<\infty$, then $\Gd_C(X)=\depth R- \depth_R(X).$}}
\item[(2)]{The following conditions are equivalent:}
\begin{itemize}
\item[(i)]{{$C$ is dualizing.}}
\item[(ii)]{{$\GCd_R(k)<\infty$.}}
\end{itemize}
\item[(3)]{{If $\GCd_R(X)<\infty$, then $\inf\uhom_R(X,C)=\depth_R(X)-\depth_R(C).$}}
\end{itemize}
\end{lem}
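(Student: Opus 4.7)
My plan is to handle the three parts in the order $(1)\to(3)\to(2)$, since $(3)$ follows from $(1)$ by unwinding Definition~\ref{2.2}, and $(2)$ depends on the depth information that $(1)$ makes available. For $(1)$, which is an Auslander--Buchsbaum formula for $G_C$-dimension of complexes, the strategy of \cite{CF1} is inductive on $\GCd_R(X)$: one reduces to the totally $C$-reflexive base case $\GCd_R(Y)=0$, where the identity $\depth_R(Y)=\depth R$ is verified using the biduality isomorphism $Y\simeq\uhom_R(\uhom_R(Y,C),C)$, and then propagates the formula through the short exact sequences arising from a $G_C$-projective resolution, tracking depth via the long exact sequence of $\uhom_R(k,-)$.

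For $(3)$, Definition~\ref{2.2} supplies the identity $\inf\uhom_R(X,C)=\inf C-\GCd_R(X)$ whenever $X\in\mathcal{R}(C)$, which combines with $(1)$ to give $\inf\uhom_R(X,C)=\inf C-\depth R+\depth_R(X)$. It then remains to observe $\inf C-\depth R=-\depth_R(C)$; for this, specialize $(1)$ to $X=C$, using $\uhom_R(C,C)\simeq R$ to compute $\GCd_R(C)=\inf C-\inf R=\inf C$, and read off $\depth_R(C)=\depth R-\inf C$, as required.

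For $(2)$, the direction $(i)\Rightarrow(ii)$ is Christensen's classical observation that when $C$ is dualizing, $\mathcal{D}_\Box^f(R)\subseteq\mathcal{R}(C)$, so in particular $\GCd_R(k)<\infty$. The substantive direction is $(ii)\Rightarrow(i)$: from $\GCd_R(k)<\infty$ one obtains $k\in\mathcal{R}(C)$, so $\uhom_R(k,C)$ lies in $\mathcal{D}_\Box^f(R)$. Since each $\Ext^i_R(k,C)$ is annihilated by $\fm$ and finitely generated, it is a finite-dimensional $k$-vector space, and boundedness forces these to vanish for $i\gg 0$. Reading off the ranks yields $\mu^i_R(\fm,C)<\infty$ for all $i$ and $\mu^i_R(\fm,C)=0$ for $i\gg 0$, and Bass's theorem in its complex-level extension to $\mathcal{D}_\Box^f(R)$ then forces $\id_RC<\infty$, so $C$ is dualizing.

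The main obstacle is the implication $(ii)\Rightarrow(i)$ in $(2)$: the single identity $\uhom_R(k,C)\in\mathcal{D}_\Box^f(R)$ must be upgraded to the global conclusion $\id_RC<\infty$, and this genuinely requires a complex-level Bass vanishing theorem rather than elementary diagram chasing. By contrast, the A--B formula in $(1)$, while technical in its inductive setup, is standard once the base case is dispatched, and $(3)$ is pure bookkeeping once $(1)$ is in hand.
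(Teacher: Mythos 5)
First, note that the paper does not actually prove this lemma: it is stated with the remark that ``the reader is referred to \cite{CF1}'', where part (1) is the Auslander--Buchsbaum formula \cite[Theorem 3.14]{CF1}, part (3) is \cite[Corollary 3.2]{CF1}, and part (2) is \cite[Proposition 8.4]{CF1}. Measured against those sources, your treatment of (2) is essentially Christensen's own argument and is correct: $k\in\mathcal{R}(C)$ forces $\uhom_R(k,C)$ to be homologically bounded, so the Bass numbers of $C$ vanish eventually, and the Bass characterization of injective dimension for complexes in $\mathcal{D}_{\Box}^f(R)$ (the result the paper itself invokes as \cite[Theorem 6.1.13]{CHFox}) gives $\id_R C<\infty$. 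Your derivation of (3) from (1) is also arithmetically sound, including the specialization $X=C$ to get $\depth_R(C)=\depth R-\inf C$.

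The genuine gap is in your proof of (1), and it propagates to (3) because you make (3) depend on (1). Your induction is run on ``the short exact sequences arising from a $G_C$-projective resolution'', but here $C$ is a semidualizing \emph{complex}: the resolution-theoretic notion of $\GCpd$ (Definition 2.3 of the paper) only exists when $C$ is a module, and for complexes $\GCd_R(X)$ is \emph{defined} as $\inf C-\inf\uhom_R(X,C)$, with no exact sequences to induct along. Moreover, your base case ``$\GCd_R(Y)=0$ implies $\depth_R(Y)=\depth R$, verified using biduality'' is exactly where the real content sits, and biduality alone does not yield it; one needs the depth identity $\depth_R\uhom_R(X,C)=\depth_R(C)+\inf X$ (obtained from the adjunction $\uhom_R(k,\uhom_R(X,C))\simeq\uhom_k(k\utp_RX,\uhom_R(k,C))$ together with Nakayama). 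Once that identity is in hand, applying it to $Y\simeq\uhom_R(\uhom_R(Y,C),C)$ gives $\depth_R(Y)=\depth_R(C)+\inf\uhom_R(Y,C)$ for \emph{every} $Y\in\mathcal{R}(C)$ --- which is precisely statement (3) --- and then (1) follows in one line from $\GCd_R(Y)=\inf C-\inf\uhom_R(Y,C)$ and $\inf C+\depth_R(C)=\depth R$. In short: reverse your dependency, prove (3) first by the depth formula, and discard the induction, which is both unavailable in this generality and unnecessary.
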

The following lemma will play an essential role in the proof of Theorem \ref{Maa}, which is one of our main results in the present paper.
\begin{lem}{\label{Lemma}}Let $C$ be a semidualizing $R$-complex and $M$ be a finitely generated $R$-module. Suppose that $\mathbf{x}=x_1,\dots,x_n$ is an $M$-sequence and $\GCd_R(M)$ is finite. Then $\GCd_{R}({M}/{\mathbf{x}M})=\GCd_R(M)+n$.
\end{lem}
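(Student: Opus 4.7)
The plan is to reduce to the case $n=1$ by induction and then invoke the Auslander--Bridger-type formula of Lemma~\ref{Lem2.2}(1). For the inductive step with $n>1$, note that $x_1,\dots,x_{n-1}$ is $M$-regular and $x_n$ is regular on $N:=M/(x_1,\dots,x_{n-1})M$; the induction hypothesis gives $\GCd_R(N)=\GCd_R(M)+n-1<\infty$, and applying the $n=1$ case to $x_n$ acting on $N$ yields $\GCd_R(M/\mathbf{x}M)=\GCd_R(N)+1=\GCd_R(M)+n$. Hence it suffices to handle $n=1$.

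For the base case, set $x:=x_1$. The short exact sequence
$$0\longrightarrow M\xrightarrow{\,x\,} M\longrightarrow M/xM\longrightarrow 0$$
induces a distinguished triangle $M\to M\to M/xM\to \Sigma M$ in $\mathrm{D}(R)$. Since $\GCd_R(M)<\infty$, the complex $M$ lies in the $C$-reflexive class $\mathcal{R}(C)$; the two-out-of-three closure of $\mathcal{R}(C)$ under triangles then promotes $M/xM\in\mathcal{R}(C)$, so $\GCd_R(M/xM)<\infty$. With finiteness secured, Lemma~\ref{Lem2.2}(1) supplies $\GCd_R(M)=\depth R-\depth_R(M)$ and $\GCd_R(M/xM)=\depth R-\depth_R(M/xM)$; the classical depth-drop identity $\depth_R(M/xM)=\depth_R(M)-1$ for the $M$-regular element $x\in\fm$ then yields $\GCd_R(M/xM)=\GCd_R(M)+1$, as desired.

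The principal obstacle is the finiteness step that promotes $\GCd_R<\infty$ from $M$ to $M/xM$; once that is in hand, depth arithmetic via Lemma~\ref{Lem2.2}(1) closes the argument. A self-contained alternative, which avoids appealing to the Auslander--Bridger formula, would compute $\inf\uhom_R(M/xM,C)$ directly from the long exact Ext sequence attached to the above triangle, using Nakayama on the top nonzero Ext module $\Ext_R^{-s}(M,C)$ (where $s=\inf\uhom_R(M,C)$) to ensure that the infimum drops by exactly one, and then invoking the defining formula $\GCd_R(X)=\inf C-\inf\uhom_R(X,C)$ from Definition~\ref{2.2}.
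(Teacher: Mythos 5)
Your proof is correct and takes essentially the same route as the paper's: reduce to $n=1$ by induction, establish $\GCd_R(M/xM)<\infty$ from the triangle $M\xrightarrow{x}M\to M/xM$, and finish with the Auslander--Bridger formula of Lemma~\ref{Lem2.2}(1) together with $\depth_R(M/xM)=\depth_R(M)-1$. The only difference is presentational: where you invoke the two-out-of-three closure of $\mathcal{R}(C)$ under distinguished triangles as a known fact, the paper proves exactly that step by hand, via a short exact sequence of projective resolutions, the induced ladder of biduality maps, and the five lemma.
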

\begin{proof}By use of induction on $n$, it is sufficient to prove in the case where $n=1$. So, let $x$ be a nonzero divisor on $M$ and consider the exact sequence
$0\longrightarrow M\stackrel{x}\longrightarrow M\longrightarrow M/xM\longrightarrow0$ which yields the following exact sequence of $R$-complexes
$$0\longrightarrow P^{'}\longrightarrow P\longrightarrow P^{''}\longrightarrow0,$$
where $P^{'}$, $P$ and $P^{''}$ are projective resolutions of $M$, $M$ and $M/xM$, respectively. Now, let $I$ be a semi-injective resolution for $C$ and consider the following exact sequence of $R$-complexes and the commutative diagram of complexes with exact rows:
$$0\longrightarrow \Hom_R(P^{''},I)\longrightarrow \Hom_R(P,I)\longrightarrow \Hom_R(P^{'},I)\longrightarrow0$$
and
$$\begin{CD}
0@>>>P^{'} @>>> P @>>> P^{''} @>>>0\\
@. @V\delta_{P^{'}}^{I}VV @VV\delta_{P}^{I}V @VV\delta_{P^{''}}^{I}V \\
0@>>> \Hom_R(\Hom_R(P^{'},I),I) @>>>\Hom_R(\Hom_R(P,I),I) @>>> \Hom_R(\Hom_R(P^{''},I),I) @>>> 0. \\
\end{CD}$$

Now, when we pass to homology, the above exact sequence and diagram yield the following long exact sequence of homologies and the long exact ladder:
$$\cdots\longrightarrow \H_i(\Hom_R(P^{''},I))\longrightarrow \H_i(\Hom_R(P,I))\longrightarrow \H_i(\Hom_R(P^{'},I))\longrightarrow \H_{i-1}(\Hom_R(P^{''},I))\longrightarrow\cdots$$
and
$$\begin{CD}
\cdots @>>>\H_i(P^{'}) @>>> \H_i(P) @>>> \H_{i}(P^{''}) @>>>\H_{i-1}(P^{'}) @>>>\cdots\\
@. @V\H_{i}(\delta_{P^{'}}^{I})VV @VV\H_i(\delta_{P}^{I})V @VV\H_i(\delta_{P^{''}}^{I})V@ VV\H_{i-1}(\delta_{P^{'}}^{I})V \\
\cdots @>>> \H_{i}(X^{'}) @>>>\H_i(X) @>>> \H_i(X^{''}) @>>>\H_{i-1}(X^{'})@>>> \cdots ,
\end{CD}$$
where $X^{''}=\Hom_R(\Hom_R(P^{''},I),I)$, $X^{'}=\Hom_R(\Hom_R(P^{'},I),I))$ and $X=\Hom_R(\Hom_R(P,I),I))$. Notice that, by our assumption, one has  $\H_{i-1}(\delta_{P^{'}}^{I})$ and $\H_i(\delta_{P}^{I})$ are isomorphisms for all $i$ and hence $\H_i(\delta_{P^{''}}^{I})$ is an isomorphism for all $i$. Also, by our assumption, $\uhom_R(M, C)$ is bounded, and so the above long exact sequence of homologies follows that $\uhom_R(M/xM, C)$ is also bounded. Therefore, $\GCd_R(M/xM)$ is finite and then by \cite[Theorem 3.14]{CF1} one has $\GCd_R(M/xM)=\depth R-\depth_R (M/xM)=\depth R-\depth_R(M)+1=\GCd_R(M)+1$, as required.
\end{proof}


Notice that, in view of \cite[Proposition 3.3,13]{BH}, over a local Cohen-Macaulay ring $R$, a finitely generated $R$-module $C$ is dualizing (canonical) if and only if it is a faithful maximal Cohen-Macaulay $R$-module of type 1, and so a semidualizing $R$-module $C$ is dualizing if and only if it is a Cohen-Macaulay $R$-module of type 1. So, the next theorem provides a generalization of this result.
\begin{thm}{\label{Anni}}Let $(R,\fm)$ be a local ring and \emph{$C\in\mathcal{\D}_{\Box}^f(R)$}. Then, $C$ is a Cohen-Macaulay semidualizing $R$-complex of type 1 if and only if $C$ is a dualizing $R$-complex.
\begin{proof}($\Longrightarrow$). First notice that $C\otimes_R \widehat{R}$ is a semidualizing complex for $\widehat{R}$ and also $C\utp_{R}\widehat{R}\simeq C\otimes_R \widehat{R}$. Hence, for all integers $i$, one has the following equalities:
 \[\begin{array}{rl}
\mu_{\widehat R}^i(\fm\widehat{R},C\otimes_R \widehat{R})&=\vdim_{k}\H_{-i}(\uhom_{\widehat{R}}(k,C\otimes_R \widehat{R}))\\
&=\vdim_{k}\H_{-i}(\uhom_{R}(k,C)\utp_{R}\widehat{R})\\
&=\vdim_{k}(\H_{-i}(\uhom_{R}(k,C))\otimes_R \widehat{R})\\
&=\vdim_{k}\H_{-i}(\uhom_{R}(k,C))\\
&=\mu_R^i(\fm,C).
\end{array}\]
Therefore, one can use \cite[Proposition 6.3.9]{CHFox} and \cite[6.2.9]{CHFox} to deduce that $\depth_{R}(C)=\depth_{\widehat{R}}(C\otimes_{R}\widehat{R})$, $\id_{\widehat{R}}(C\otimes_{R}\widehat{R})=\id_{R}(C)$ and $\dim_{R}(C)=\dim_{\widehat{R}}(C\otimes_{R}\widehat{R})$. Hence, we may and do assume that $R$ is complete, and so it has a dualizing complex as $\D$. Now, consider the following isomorphisms in the derived category $\D(R)$:
\[\begin{array}{rl}
(*)~~\uhom_{R}(k,C)&\simeq\uhom_{R}(k,\uhom_{R}(\uhom_{R}(C,\D),\D))\\
&\simeq\uhom_{R}(k\utp_R\uhom_{R}(C,\D),\D)\\
&\simeq\uhom_{R}(k\utp_R\uhom_{R}(C,\D)\utp_{k}k,\D)\\
&\simeq\uhom_{k}(k\utp_R\uhom_{R}(C,\D),\uhom_R(k,\D)),
\end{array}\]

to obtain the equality $r_{R}(C)=\underset{i+j=\depth_R(C)}\sum\beta^{R}_j(\uhom_{R}(C,\D))\mu_R^i(\fm,D)$. Next, in view of
\cite[6.2.9]{CHFox}, one has $\mu_R^i(\fm,\D)=\delta_{i\depth_R(\D)}$ which implies that $\beta^{R}_{\depth_R(C)-\depth_R(\D)}(\uhom_{R}(C,\D))=1$. On the other hand, by \cite[Corollary 3.2]{CF1} and \cite[Lemma 3.1(a)]{CF1} we have:
 \begin{equation}
 \inf\uhom_{R}(C,\D)=\depth_R(C)-\depth_R(\D)=\inf\D-\inf C,
 \end{equation}
 and
\begin{equation}
\sup\uhom_{R}(C,\D)=\dim_R(C)-\dim_R(\D).
\end{equation}
Therefore, one can use \cite[Lemma 4.3.8]{CHFox} to see that $\H_{\inf\D-\inf C}(\uhom_{R}(C,\D)$ is a principal $R$-module, and so there exists an ideal $\fa$ of $R$ such that $\H_{\inf\D-\inf C}(\uhom_{R}(C,\D))\cong R/\fa$. As $C$ and $\D$ are Cohen-Macaulay, by equalities (3.1) and (3.2), one can obtain the isomorphisms $\uhom_{R}(C,\D)\simeq\Sigma^{\inf\D-\inf C}\H_{\inf\D-\inf C}(\uhom_{R}(C,\D))\simeq\Sigma^{\inf\D-\inf C}R/\fa$. Then, we have
 \[\begin{array}{rl}
C&\simeq\uhom_{R}(\uhom_{R}(C,\D),\D)\\
&\simeq\uhom_{R}(\Sigma^{\inf\D-\inf C}R/\fa,\D)\\
&\simeq\Sigma^{\inf\C-\inf \D}(\uhom_{R}(R/\fa,\D)).
\end{array}\]

Hence, $\uhom_{R}(R/\fa,\D)\simeq\Sigma^{\inf\D-\inf C}C$, and then one has:
\[\begin{array}{rl}
\Sigma^{\inf\D-\inf C}R&\simeq\uhom_{R}(C,\Sigma^{\inf\D-\inf C}C)\\
&\simeq\uhom_{R}(C,\uhom_{R}(R/\fa,\D))\\
&\simeq\uhom_{R}(C\utp_{R}R/\fa,\D)\\
&\simeq\uhom_{R}(R/\fa,\uhom_{R}(C,\D))\\
&\simeq\uhom_{R}(R/\fa,\Sigma^{\inf\D-\inf C}R/\fa)\\
&\simeq\Sigma^{\inf\D-\inf C}\uhom_{R}(R/\fa,R/\fa).
\end{array}\]
Therefore, $R\simeq\uhom(R/\fa,R/\fa)$, and so $\fa=0$. It, therefore, concludes that $C\simeq\Sigma^{\inf\D-\inf C}\D$, and thus $C$ has a finite injective dimension, as required. The implication ($\Longleftarrow$) follows from \cite[V.3.4]{RHart} and the fact that $\mu^{\depth C}_{R}(\fm,C)\neq 0$.
\end{proof}
\end{thm}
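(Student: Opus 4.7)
The plan is to prove the two implications separately, with the reverse direction being a quick consequence of classical properties of dualizing complexes and the forward direction requiring the main work. For $(\Longleftarrow)$: if $C$ is dualizing then it is semidualizing by definition, and by \cite[V.3.4]{RHart} the Bass numbers $\mu^i_R(\fm,C)$ of a dualizing complex over a local ring vanish except in a single degree, where they equal $1$. In particular $\depth_R(C)=\dim_R(C)$ (Cohen--Macaulayness) and $\r_R(C)=1$.

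For $(\Longrightarrow)$, I would first reduce to the complete case. A short calculation using faithful flatness of $\widehat R$ over $R$ and $k\otimes_R\widehat R\cong k$ shows $\mu^i_R(\fm,C)=\mu^i_{\widehat R}(\fm\widehat R,\,C\otimes_R\widehat R)$ for every $i$, and by \cite[6.2.9 and 6.3.9]{CHFox} the invariants $\depth$, $\dim$, and $\id$ are preserved under completion. So I may assume that $R$ is complete and in particular admits a dualizing complex $\D$.

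The core of the argument is to combine the biduality $C\simeq\uhom_R(\uhom_R(C,\D),\D)$ with the Cohen--Macaulay hypotheses on $C$ and $\D$. Applying $\uhom_R(k,-)$ and using the tensor--Hom and Hom--Hom adjunctions gives
\[\uhom_R(k,C)\simeq\uhom_k\bigl(k\utp_R\uhom_R(C,\D),\,\uhom_R(k,\D)\bigr).\]
Taking $k$-dimensions of the appropriate homology and inserting $\mu^i_R(\fm,\D)=\delta_{i,\depth_R(\D)}$ (the Bass-number signature of a dualizing complex) expresses $\r_R(C)$ as a convolution of Betti numbers of $\uhom_R(C,\D)$ with the single nonzero Bass number of $\D$; the hypothesis $\r_R(C)=1$ then collapses this to $\beta^R_{\depth_R(C)-\depth_R(\D)}(\uhom_R(C,\D))=1$. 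Next, \cite[Corollary~3.2]{CF1} and \cite[Lemma~3.1(a)]{CF1} yield
\[\inf\uhom_R(C,\D)=\depth_R(C)-\depth_R(\D),\qquad \sup\uhom_R(C,\D)=\dim_R(C)-\dim_R(\D),\]
and the Cohen--Macaulay assumption on both $C$ and $\D$ forces these quantities to coincide. Hence $\uhom_R(C,\D)$ has homology concentrated in the single degree $n:=\inf\D-\inf C$; combined with the Betti-number $1$ conclusion above, I expect \cite[Lemma~4.3.8]{CHFox} to identify $\H_n(\uhom_R(C,\D))$ as a cyclic $R$-module $R/\fa$, so $\uhom_R(C,\D)\simeq\Sigma^n R/\fa$.

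Finally, I would feed this identification back into biduality: using the $\utp$--$\uhom_R$ adjunction together with the homothety isomorphism $\uhom_R(C,C)\simeq R$, one obtains a chain of derived-category isomorphisms culminating in $R\simeq\uhom_R(R/\fa,R/\fa)\cong R/\fa$, which forces $\fa=0$. Hence $\uhom_R(C,\D)\simeq\Sigma^n R$, and dualizing once more yields $C\simeq\Sigma^{-n}\D$, so $C$ inherits finite injective dimension from $\D$ and is itself dualizing. The main obstacle I expect is the shift bookkeeping: making sure that the Cohen--Macaulay hypothesis is used at exactly the right point to collapse $\uhom_R(C,\D)$ into a single homological degree, and that the tensor--Hom manipulations line up precisely to produce the isomorphism $R\simeq\uhom_R(R/\fa,R/\fa)$ needed to conclude $\fa=0$; without both Cohen--Macaulay assumptions, one would retain a genuinely bounded complex rather than a shifted cyclic module, and the argument would break.
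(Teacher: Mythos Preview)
Your proposal is correct and follows essentially the same approach as the paper's proof: reduce to the complete case, use biduality with respect to a dualizing complex $\D$, compute Bass numbers via adjunction to get $\beta^R_{\depth_R(C)-\depth_R(\D)}(\uhom_R(C,\D))=1$, invoke the Cohen--Macaulay hypotheses together with \cite[Corollary~3.2, Lemma~3.1(a)]{CF1} to force $\uhom_R(C,\D)\simeq\Sigma^n R/\fa$, and then use the homothety $\uhom_R(C,C)\simeq R$ and adjunction to deduce $R\simeq\uhom_R(R/\fa,R/\fa)$, hence $\fa=0$. The only cosmetic remark is that your final step ``$\uhom_R(R/\fa,R/\fa)\cong R/\fa$'' should be read at the level of $\H_0$ (i.e., $\Hom_R(R/\fa,R/\fa)\cong R/\fa$), which is exactly what the paper uses to conclude $\fa=0$.
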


The following lemma, which provides some equivalent conditions for the existence of a type one $R$-complex with finite $G_C$-dimension, will be used in the proof of some next results in the present paper.
\begin{lem}{\label{Th3}} Let $(R,\fm)$ be a local ring and $C$ be a semidualizing $R$-complex. Then, the following statements are equivalent:
\begin{itemize}
\item[(i)]{There exists an $R$-complex $Z\in\mathcal{D}_{\Box}^f(R)$ with $\GCd_{R}(Z)<\infty$ and $r_{R}(Z)=1$}.
\item[(ii)]{$r_R(R)=\beta^{R}_{\inf{C}}(C)$.}
\item[(iii)]{$r_{R}(C)=1$}.
\end{itemize}
\begin{proof}Let $Z$ be a finite $G_{C}$-dimension $R$-complex in $\mathcal{D}_{\Box}^f(R)$. Set $\GCd_{R}(Z):=s$ and notice that by \cite[Lemma 3.12]{CF1} one has $\GCd_{R}(\Sigma^{-s}Z)=\GCd_{R}(Z)-s=0$. Now, let $\depth_R(Z):=d$ and consider the following equalities:
\[\begin{array}{rl}
\H_{-d}(\uhom_{R}(k,Z))&=\H_{-d}(\uhom_{R}(k,\Sigma^{s}(\Sigma^{-s}Z)))\\
&=\H_{-d}(\Sigma^s\uhom_{R}(k,\Sigma^{-s}Z))\\
&=\H_{-(d+s)}(\uhom_{R}(k,\Sigma^{-s}Z))\\
&=\H_{-\depth R}(\uhom_{R}(k,\Sigma^{-s}Z)).\\
\end{array}\]
Notice that the last equality follows from Lemma \ref{Lem2.2}(1). Next, set $X:=\Sigma^{-s}Z$ and notice that $X^{\dag\dag}\simeq X$, where $(-)^\dag:=\uhom_R(-,C)$. Hence, using the above equalities yields the following equalities:
\[\begin{array}{rl}
r_R(Z)&=\vdim_k\H_{-\depth R}(\uhom_{R}(k,X))\\
&=\vdim_k\H_{-\depth R}(\uhom_{R}(k,X^{\dag\dag}))\\
&=\vdim_k\H_{-\depth R}(\uhom_{R}(k,\uhom_R(X^{\dag},C))\\
&=\vdim_k\H_{-\depth R}(\uhom_{R}(k\utp_R X^{\dag},C))\\
&=\vdim_k\H_{-\depth R}(\uhom_{R}(k\utp_R X^{\dag}\utp_k k,C))\\
&=\vdim_k\H_{-\depth R}(\uhom_{k}(k\utp_R X^{\dag},\uhom_R(k,C)))\\
&=\underset{\tiny{i+j=\depth R}}\sum\beta^{R}_i(X^{\dag})\mu_{R}^j(\fm,C).
\end{array}\]
On the other hand, in view of \cite[6.2.4, 6.2.9]{CHFox}, one has $\inf\{i\in\mathbb{Z} ~|~\beta_{i}^R(X^{\dag})\neq 0)\}=\inf X^{\dag}$ and $\inf\{i\in\mathbb{Z} ~|~\mu_{R}^i(\fm, C)\neq 0)\}=\depth_R(C)$. As $\GCd_{R}(X) =0$, using Lemma \ref{Lem2.2}(1)(3) and the fact that $\inf C+\depth_R(C)=\depth R$ implies that $\inf X^{\dag}=\inf C$. Therefore, one can deduce that \begin{equation}
r_R(Z)=\beta^{R}_{\inf C}(X^{\dag})\mu_{R}^{\depth_R(C)}(\fm,C).
\end{equation}

Furthermore, notice that $\GCd_R(R)$ is finite, and so $\GCd_R(R)=\depth R-\depth R=0$. Therefore, in the above argument, by replacing $Z$ with $R$, one can deduce that
\begin{equation}
r_R(R)=\beta^{R}_{\inf C}(C)\mu_{R}^{\depth_R(C)}(\fm,C).
\end{equation}

Hence, the implication (i)$\Longrightarrow$(ii) follows from the equalities $(3.3)$ and $(3.4)$. Also, the implications (ii)$\Longleftrightarrow$(iii) follows from equality $(3.4)$. Finally, the implication (iii)$\Longrightarrow$(i) is a consequence of the fact that $\GCd_R(C)$ is finite.

\end{proof}
\end{lem}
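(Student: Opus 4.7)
The strategy is to derive, for any $X\in\mathcal{R}(C)$ with $\GCd_R(X)=0$, a closed-form identity
$$r_R(X)=\beta_{\inf C}^R(X^{\dag})\,\mu_R^{\depth_R(C)}(\fm,C), \qquad (\star)$$
where $X^{\dag}:=\uhom_R(X,C)$, and then to exploit $(\star)$ by specializing to $X=R$ and to the hypothetical complex of (i). First the trivial direction: (iii)$\Longrightarrow$(i) follows by taking $Z:=C$, since $\GCd_R(C)=0$ and $r_R(C)=1$ by hypothesis.

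To prove $(\star)$, start with $X$ satisfying $\GCd_R(X)=0$, which by Lemma \ref{Lem2.2}(1) forces $\depth_R(X)=\depth R$. Starting from the biduality isomorphism $X\simeq\uhom_R(X^{\dag},C)$ and chasing Hom-tensor adjunction (rewriting $k\utp_R X^{\dag}\simeq k\utp_k(k\utp_R X^{\dag})$ to turn an $R$-adjunction into a $k$-adjunction) yields
$$\uhom_R(k,X)\simeq\uhom_k(k\utp_R X^{\dag},\uhom_R(k,C)).$$
Taking the rank of the $-\depth R$ homology over the field $k$ by the K\"unneth formula produces the double-sum identity $r_R(X)=\sum_{i+j=\depth R}\beta_i^R(X^{\dag})\mu_R^j(\fm,C)$. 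This sum collapses to the single term $(\star)$ because $\mu_R^j(\fm,C)=0$ for $j<\depth_R(C)$, Lemma \ref{Lem2.2}(3) gives $\inf X^{\dag}=\depth_R(X)-\depth_R(C)=\depth R-\depth_R(C)$ (so $\beta_i^R(X^{\dag})$ vanishes for $i<\inf C$ once we identify $\inf C=\depth R-\depth_R(C)$), and the depth identity $\depth R=\inf C+\depth_R(C)$, obtained by applying Lemma \ref{Lem2.2}(3) to $X=R$ and using $R^{\dag}\simeq C$, forces the two minimum indices to sum exactly to $\depth R$.

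For a general $Z$ as in (i), shifting by $\Sigma^{-\GCd_R(Z)}$ produces an $X\in\mathcal{R}(C)$ with $\GCd_R(X)=0$, and a short computation using $\depth_R(\Sigma^{-s}Z)=\depth_R(Z)+s$ together with the matching shift on Bass numbers shows $r_R(X)=r_R(Z)$, so $(\star)$ applies. Applied to $X=R$, formula $(\star)$ reads
$$r_R(R)=\beta_{\inf C}^R(C)\,r_R(C),$$
since $R^{\dag}\simeq C$ and $r_R(C)=\mu_R^{\depth_R(C)}(\fm,C)$. Because $\beta_{\inf C}^R(C)\neq 0$, the equivalence (ii)$\Longleftrightarrow$(iii) falls out directly by comparing this to the defining identity of (ii). For (i)$\Longrightarrow$(iii), the assumption $r_R(Z)=r_R(X)=1$ together with $(\star)$ forces each of the positive-integer factors $\beta_{\inf C}^R(X^{\dag})$ and $r_R(C)$ to equal $1$.

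The main obstacle is the collapse step. One must simultaneously pin down $\inf X^{\dag}$ via the $G_C$-dimension formulas of Lemma \ref{Lem2.2} and establish the tight depth relation $\inf C+\depth_R(C)=\depth R$ for a semidualizing complex, since these are exactly what conspire to leave only a single surviving term in the K\"unneth double sum. Everything else is routine: the shift reduction to $\GCd_R=0$, the chain of adjunctions, and the final numerical comparison of two instances of $(\star)$.
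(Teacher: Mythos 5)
Your proposal is correct and follows essentially the same route as the paper: the same reduction to $\GCd_R(X)=0$ by shifting, the same adjunction/K\"unneth computation yielding $r_R(Z)=\sum_{i+j=\depth R}\beta_i^R(X^{\dag})\mu_R^j(\fm,C)$, and the same collapse to the single term $\beta_{\inf C}^R(X^{\dag})\,r_R(C)$ via the minimal nonvanishing indices, with the $R$-instance giving $r_R(R)=\beta_{\inf C}^R(C)\,r_R(C)$. The only cosmetic difference is that you close the cycle through (i)$\Longrightarrow$(iii) rather than (i)$\Longrightarrow$(ii), which is equivalent given (ii)$\Longleftrightarrow$(iii).
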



Foxby and Roberts, in \cite[Corollary 9.6.3 and Remark 9.6.4]{BH}, showed that a local ring $(R,\fm)$ is Gorenstein if and only if $\mu_{R}^{\dim R}(\fm,R)=1$.  So, the following result provides a characterization of a dualizing complex of $R$ in terms of the certain Bass number of $R$. Also, in view of \cite[Proposition 3.3.11]{BH}, one has if $C$ is a canonical $R$-module over a local Cohen-Macaulay ring $R$, then $\r(R)$ is equal to the minimal generator of $C$, that is, $\r(R)=\beta_{0}^R(C)$. Therefore, the following theorem shows that the converse of this result is also true.
\begin{thm}\label{ddd}Let $(R,\fm)$ be a local ring and let $C$ be a semidualizing $R$-complex. Then the following statements are equivalent:
\begin{itemize}
\item[(i)]{$C$ is dualizing}.
\item[(ii)]{$\mu^{\inf C+\dim C }(\fm,R)=\beta_{\inf C}^R(C)$}.
\end{itemize}
\end{thm}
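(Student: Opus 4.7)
The plan is to reduce both implications to a single homological identity linking the Bass numbers of $R$ to the Bass and Betti numbers of $C$. Starting from the semidualizing isomorphism $R\simeq\uhom_R(C,C)$, a chain of adjunctions yields
\[
\uhom_R(k,R)\simeq\uhom_R(k,\uhom_R(C,C))\simeq\uhom_R(k\utp_R C,\,C)\simeq\uhom_k(k\utp_R C,\uhom_R(k,C)),
\]
where the last isomorphism uses the restriction-coinduction adjunction along $R\twoheadrightarrow k$. Both factors on the right are complexes of $k$-vector spaces, hence quasi-isomorphic to direct sums of shifted copies of their homology; taking $k$-dimensions in degree $-n$ produces the master identity
\[
(\ast)\qquad \mu_R^n(\fm,R) \;=\; \sum_{i+j=n} \beta_i^R(C)\,\mu_R^j(\fm,C).
\]
I will pair $(\ast)$ with the standard semidualizing equalities $\depth_R C=\depth R-\inf C$ and $\dim_R C=\dim R-\sup C$ (the latter via Lemma~3.1(a) of \cite{CF1} applied to $R\in\mathcal{R}(C)$), which together give $\inf C+\dim_R C=\dim R-\amp C$.

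For $(i)\Rightarrow(ii)$, the dualizing hypothesis makes $\mu_R^j(\fm,C)=\delta_{j,\depth_R C}$ and forces the amplitude equality $\amp C=\dim R-\depth R$, so $C$ is Cohen-Macaulay as a complex and $n:=\inf C+\dim_R C=\depth R$; only the summand $(i,j)=(\inf C,\depth_R C)$ of $(\ast)$ survives, giving $\mu_R^n(\fm,R)=\beta_{\inf C}^R(C)$. For $(ii)\Rightarrow(i)$, I compare the distinguished summand $\beta_{\inf C}^R(C)\,\mu_R^{\dim_R C}(\fm,C)$ of $(\ast)$ at the same $n$ against the hypothesis; non-negativity of every other summand combined with the complex-level Bass inequality $\mu_R^{\dim_R C}(\fm,C)\geq 1$ squeezes $\mu_R^{\dim_R C}(\fm,C)=1$ and forces every remaining summand to vanish. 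Taking the summand with $j=\depth_R C$ and using $\r_R(C)>0$ produces $\beta_{\inf C+d}^R(C)=0$, where $d:=\dim_R C-\depth_R C$. The plan is to upgrade this (together with the related vanishings) to the Cohen-Macaulay equality $d=0$; once that is in hand, $\mu_R^{\depth_R C}(\fm,C)=1$ is precisely the type-one condition, and Theorem~\ref{Anni} immediately delivers the dualizing property.

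The hard part will be the Cohen-Macaulay step. The cleanest route I foresee is to pass to the completion $\widehat{R}$ (which preserves semidualizing, dualizing, and every Bass and Betti number in sight), fix a dualizing complex $\D$ on $\widehat{R}$, and form the auxiliary semidualizing complex $C^{\dag}:=\uhom_R(C,\D)$, for which $\amp C^{\dag}=d$. Applying $(\ast)$ both to $C$ and to $C^{\dag}$, together with the Bass-Betti duality $\mu_R^j(\fm,C^{\dag})=\beta_{j-\depth R}^R(C)$ (itself an instance of $(\ast)$), produces the Betti-number factorization $\beta_i^R(\D)=\sum_{p+q=i}\beta_p^R(C)\,\beta_q^R(C^{\dag})$. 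The hypothesis (ii) rewrites as $\beta_d^R(\D)=\beta_{\inf C}^R(C)$; expanding via this factorization and exploiting positivity of the extremal Betti numbers $\beta_{\inf C}^R(C)$, $\beta_{\sup C}^R(C)$, $\beta_{\inf C^{\dag}}^R(C^{\dag})$, $\beta_{\sup C^{\dag}}^R(C^{\dag})$ (Nakayama on the top and bottom homologies of minimal semi-free resolutions) should extract $\beta_{\sup C^{\dag}}^R(C^{\dag})=1$ plus a package of interior vanishings, which together force $C^{\dag}\sim R$, equivalent to $C$ being dualizing.
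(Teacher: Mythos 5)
Your reduction to the master identity $\mu_R^n(\fm,R)=\sum_{i+j=n}\beta_i^R(C)\mu_R^j(\fm,C)$ via adjunction and restriction to $k$ is exactly the paper's starting point, and your argument for (i)$\Rightarrow$(ii) matches the paper's (which routes the same computation through Lemma~\ref{Th3}). The problems are in (ii)$\Rightarrow$(i), at the two places where you lean on unproved assertions.

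First, you invoke ``the complex-level Bass inequality $\mu_R^{\dim_R C}(\fm,C)\geq 1$'' as if it were quotable. For finitely generated \emph{modules} the nonvanishing of $\mu^{\dim M}(\fm,M)$ is classical, but for complexes in $\mathrm{D}^f_{\Box}(R)$ there is no off-the-shelf statement you can cite, and this nonvanishing is precisely the crux of the paper's proof: the author passes to the completion, uses the isomorphism $(\ast)$ from Theorem~\ref{Anni} to rewrite $\mu_R^{\dim_R C}(\fm,C)=\beta^R_{\dim_R C-\dim_R \D}(\uhom_R(C,\D))$, and then argues via a minimal semi-free resolution that this Betti number is nonzero \emph{unless} $\pd_R\uhom_R(C,\D)<\infty$ --- in which case $\id_R C<\infty$ and $C$ is already dualizing, so one may assume the nonvanishing. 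Your squeeze argument cannot start without this step, so you must either supply such an argument or locate a genuine reference for the inequality for semidualizing complexes; as written this is a real gap, not a routine citation.

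Second, your endgame (forcing $\dim_R C=\depth_R C$) is only a plan: ``should extract \ldots a package of interior vanishings'' is not a proof, and the proposed Betti factorization $\beta_i^R(\D)=\sum_{p+q=i}\beta_p^R(C)\beta_q^R(\uhom_R(C,\D))$ will still run into the degenerate case in which interior Betti numbers of $C$ vanish, where positivity of the extremal ones tells you nothing. The paper resolves exactly this point with a clean dichotomy that you should adopt: from $\sum_{(i,j)\in\Lambda}\beta_i^R(C)\mu_R^j(\fm,C)=0$, either some $\beta_i^R(C)$ with $(i,j)\in\Lambda$ vanishes, whence $\pd_R C<\infty$, $C\sim R$ by \cite[Theorem 8.1]{CF1}, and the claim reduces to the Foxby--Roberts criterion $\mu_R^{\dim R}(\fm,R)=1$; or else $\mu_R^j(\fm,C)=0$ for all $\depth_R C\leq j<\dim_R C$, which forces $C$ to be Cohen--Macaulay of type one, and Theorem~\ref{Anni} concludes. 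Note also that your intermediate deduction $\beta^R_{\inf C+d}(C)=0$ only makes sense when $d>0$, i.e.\ it presupposes the non-Cohen--Macaulay case you are trying to rule out, so the case split has to be made explicit in any event.
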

\begin{proof}For the implication (i)$\Longrightarrow$(ii), first by Lemma \ref{Lem2.2} one has $\dim_R(C)+\inf C=\depth_R(C)+\inf C=\depth R$ because $C$ is Cohen-Macaulay and $\GCd_R(C)$ is finite. Therefore, it follows from Lemma \ref{Th3} and the fact that $r_R(C)=1$. For the implication (ii)$\Longrightarrow$(i), set $d=\inf C+\dim_R(C)$ and consider the following equalities:
\[\begin{array}{rl}
\H_{-d}(\uhom_{R}(k,R))&=\H_{-d}(\uhom_{R}(k,\uhom_{R}(C,C)))\\
&=\H_{-d}(\uhom_{R}(k\utp_R C,C)\\
&=\H_{-d}(\uhom_{R}(k\utp_R C\utp_{k}k,C))\\
&=\H_{-d}(\uhom_{k}(k\utp_R C,\uhom_{R}(k,C))).
\end{array}\]
Therefore, one has $\mu_{R}^d(\fm,R)=\underset{i+j=d}\sum\beta^{R}_i(C)\mu_{R}^j(\fm,C)$. Hence, by our assumption, one can see that
 $$\beta^{R}_{\inf C}(C)(\mu_{R}^{\dim_R(C)}(\fm,C)-1)+\underset{(i,j)\in\Lambda}\sum\beta^{R}_i(C)\mu_{R}^j(\fm,C)=0,$$
where $\Lambda=\{(i,j)\in\mathbb{Z}\times\mathbb{Z}~|~ i>\inf C, \depth_R(C)\leq j\neq\dim_R(C), i+j=d\}$. Notice that $\beta^{R}_{\inf C}(C)\neq 0$ and $\beta^{R}_i(C)\mu_{R}^j(\fm,C)$ are non-negative integers. Next, we may assume that $\mu_{R}^{\dim_R(C)}(\fm,C)\neq0$. To do this, first notice that, by a similar argument as in the proof of Theorem \ref{Anni} we may and do assume that $R$ is complete, and so $R$ has a dualizing complex as $\D$. Therefore, by use of the isomorphism (*) in the proof of Theorem \ref{Anni}, one has: $$\mu_{R}^{\dim_R(C)}(\fm,C)=\beta_{\dim_R(C)-\dim_R(D)}^R(\uhom_{R}(C,\D)).$$ Hence, it is enough to show that $\beta_{\dim_R(C)-\dim_R(D)}^R(\uhom_{R}(C,D))\neq 0$. As $\uhom_{R}(C,\D)\in\mathcal{D}_{\Box}^f(R)$, then by \cite[Lemma 1.1]{Foxby} there is a minimal semi-free resolution for $\uhom_{R}(C,\D)$ and considering the fact that $\inf\uhom_{R}(C,\D)=\depth_R(C)-\depth_R(D)$, it is enough to show that $\depth_R(C)-\depth_R(\D)\leq\dim_R( C)-\dim_R(\D)\leq\pd_R(\uhom_{R}(C,\D))$. To this end, if $\pd_R(\uhom_{R}(C,\D))=\infty$, then there is nothing to prove. Hence, assume that $\pd_R(\uhom_{R}(C,\D))<\infty$. Therefore, in view of \cite[Proposition 7.2.4]{CHFox}, one has $\id_R(C)=\id_R(\uhom(\uhom_{R}(C,\D),\D))$ is finite, and so $C$ is a dualizing complex. Therefore, we may and do assume that $\mu_{R}^{\dim C}(\fm, C)\neq0$. Hence, we have
 $$\mu_{R}^{\dim_R(C)}(\fm,C)=1 ~~~\text{and}~~~~ \underset{(i,j)\in\Lambda}\sum\beta^{R}_i(C)\mu_{R}^j(\fm,C)=0.$$ By considering a minimal semi-free resolution for $C$, one conclude that if there exists integers $(i,j)\in\Lambda$ for which $\beta^{R}_i(C)=0$, then $\pd_R(C)$ is finite and so by \cite[Theorem 8.1]{CF1}, one can see that $C\simeq R$. Hence, in view of \cite[Corollary 9.6.3 and Remark 9.6.4]{BH} there is nothing to prove. Therefore, one has $\mu_{R}^j(\fm,C)=0$ for all $j<\dim_R(C)$. Hence, $C$ is a Cohen-Macaulay $R$-complex of type 1. Therefore, one can use Theorem \ref{Anni} to complete the proof.
\end{proof}

Here, we should notice that by the argument for the implication (ii)$\Longrightarrow$(i), in the previous theorem, we can see that if $\mu_R^{\inf C+\dim C }(\fm,R)=\beta_{\inf C}^R(C)$, then $\mu^{\dim C }(\fm,C)=1$. Also, in \cite[Corollary 1.3]{KDT}, the authors showed that over an analytically irreducible local ring the equality $\mu_R^{\dim C }(\fm,C)=1$ implies that $C$ is a dualizing complex, and so the equality $\mu_R^{\inf C+\dim C }(\fm,R)=\beta_{\inf C}^R(C)$ holds. Therefore, it is natural to ask the following question:
\begin{ques}Let $(R,\fm)$ be a local ring and $C$ be a semidualizing $R$-complex. Then, are the following conditions equivalent?
\begin{itemize}
\item[(i)]{$\mu_R^{\dim C }(\fm,C)=1$}.
\item[(ii)]{$\mu_R^{\inf C+\dim C }(\fm,R)=\beta_{\inf C}^R(C)$}.
\end{itemize}
\end{ques}

The next theorem is one of our main results in the present paper, which is a generalization of Takahashi's result \cite[Theorem 2.3]{Tak}. Indeed, he has proven that if a local ring $R$ with $\r_R(R)=1$ has a finite $G$-dimension Cohen-Macaulay $R$-module, then $R$ is Gorenstein.
\begin{thm}{\label{Maa}}Let $(R,\fm)$ be a local ring and let $C$ be a semidualizing $R$-complex with $\r_R(C)=1$. Assume that $R$ admits a Cohen-Macaulay $R$-module $M$ of finite $G_{C}$-dimension, then $C$ is dualizing.

\begin{proof}Let $\mathbf{x}=x_1,\dots,x_n$ be a maximal $M$-sequence, then ${M}/{\mathbf{x}M}$ is an Artinian $R$-module and by Lemma \ref{Lemma} $\GCd_{R}({M}/{\mathbf{x}M})$ is finite. Therefore, by replacing $M$ with ${M}/{\mathbf{x}M}$ we may and do assume that $M$ is an Artinian $R$-module of finite $G_C$-dimension. Hence $\Ann_{R}(M)$ is an $\fm$-primary ideal, and so one can deduce that there exists a maximal $R$-sequence $\mathbf{y}=y_1,\dots,y_n$ contained in $\Ann_{R}(M)$. Therefore, in view of \cite[Proposition 6.4]{CF1} and \cite[Theorem 6.5]{CF1} we have $D:=\uhom_R({R}/{\mathbf{y}R},C)$ is a semidualizing complex for ${R}/{\mathbf{y}R}$ and $D$ is a Cohen-Macaulay ${R}/{\mathbf{y}R}$-complex if and only if $C$ is a Cohen-Macaulay $R$-complex and also $\GDd_{{R}/{\mathbf{y}R}}(M)=\GCd_R(M)+n$. Therefore, $\GDd_{{R}/{\mathbf{y}R}}(M)$ is finite. On the other hand, in view of \cite[Theorem 6.1]{CF1}, $\r_{{R}/{\mathbf{y}R}}(D)=\r_R(C)$. Therefore, Theorem \ref{Anni} implies that $C$ is a dualizing complex for $R$ if and only if $D$ is a dualizing complex for ${R}/{\mathbf{y}R}$. Hence, by replacing $R$ with ${R}/{\mathbf{y}R}$ and $C$ with $D$, we may and do assume that $\depth R=0$. Therefore, $M$ is a zero-dimensional Cohen-Macaulay $R$-module and $C$ is a semidualizing complex with $\r_R(C)=1$ and also by \cite[Theorem 3.14]{CF1} one has $\GCd_{R}(M)=\depth R-\depth_R(M)=0$. So, by Lemma \ref{Lem2.2}, $\inf C=\inf\uhom_R(M,C)$. Now, we claim that $C$ has a finite injective dimension. To do this, by \cite[Theorem 6.1.13]{CHFox}, it is enough to show that $\H_{-i}(\uhom_{R}(k,C))=0$ for all $i>-\inf C$. As, $\r_R(C)=1$, one has $\H_{-\depth_R(C)}(\uhom_{R}(k,C))\cong k$. On the other hand, \cite[Theorem 3.3(a)]{CF1} implies that $\sup\uhom_R(M,C)=\inf\uhom_R(M,C)$. Also, by \cite[Lemma 4.2.8]{CHFox} and \cite[Lemma 3.1]{CF1}, $\sup\uhom_R(M,C)\leq\sup C$ and $\inf\uhom_R(M,C)=-\depth_R(C)$. Let $\alpha$ be a non-negative integer such that $\sup\uhom_R(M,C)+\alpha=\sup C$ and so $\sup C-\alpha=-\depth_R (C)=\inf C$. Therefore, \begin{equation}
\H_{-\depth_R(C)}(\uhom_R(M,C))\neq 0 ~~\text{and}~~ \H_{i}(\uhom_R(M,C))=0 ~~\text{for all}~~ i\neq -\depth_R(C).
 \end{equation}
 Also, by \cite[Lemma 4.2.8]{CHFox}, $\sup\uhom_R(k,C)\leq\sup C$ and also $\depth_R(C)=-\sup\uhom_R(k,C)$. Hence one can deduce that $\H_{-\depth_R C}(\uhom_R(k,C))\neq 0$ and $\H_{i}(\uhom_R(k,C))=0$ for all $i>-\depth_R(C)$.

Now, let $$0=M_1 \subset M_2\subset\dots \subset M_n=M$$
be a composition series of $M$, and so we obtain the exact sequences $$0\rightarrow M_{i-1}\rightarrow M_{i} \rightarrow k\rightarrow 0,$$
for all $1\leq i\leq n$. Therefore, one can get the following exact sequence for all integers $j$ and all $1\leq i\leq n$
\begin{equation}
\H_{-j}(\uhom_R(k,C)) \rightarrow \H_{-j}(\uhom_R(M_{i},C)) \rightarrow \H_{-j}(\uhom_R(M_{i-1},C))\rightarrow \H_{-j-1}(\uhom_R(k,C)),
\end{equation}
and so we can see that $\H_{-j}(\uhom_R(M_{i},C))=0$ for all $j<\depth_R(C)$ and for all $1\leq i\leq n$. Hence, by using (3.5) and (3.6), one has the following exact sequences:

$$0\rightarrow \H_{-\depth_R(C)}(\uhom_R(k,C)) \rightarrow \H_{-\depth_R(C)}(\uhom_R(M_{i},C)) \rightarrow \H_{-\depth_R(C)}(\uhom_R(M_{i-1},C)),$$
for all $1\leq i \leq n-1$, and
$$0\rightarrow \H_{-\depth_R(C)}(\uhom_R(k,C)) \rightarrow \H_{-\depth_R(C)}(\uhom_R(M,C)) \rightarrow \H_{-\depth_R(C)}(\uhom_R(M_{n-1},C))$$ $$\rightarrow\H_{-\depth_R(C)-1}(\uhom_{R}(k,C))\rightarrow0.$$
Therefore, by setting $s=l_{R}(\H_{-\depth_R(C)-1}(\uhom_{R}(k,C)))$, one can get the following inequalities:
\[\begin{array}{rl}
l_{R}(\H_{-\depth_R(C)}(\uhom_R(M,C)))&=l_{R}(\H_{-\depth_R(C)}(\uhom_R(M_{n-1},C)))+1-s\\
&\leq l_{R}(\H_{-\depth_R(C)}(\uhom_R(M_{n-2},C)))+2-s\\
&\leq\dots\\
&\leq n-s=\l_{R}(M)-s.
\end{array}\]
As $\sup\uhom_R(M,C)=\inf\uhom_R(M,C)=-\depth_R(C)$, then $\Sigma^{t}\uhom_R(M,C)\simeq \H_{-t}(\uhom_R(M,C))$, where $t:=\depth_R(C)$. Furthermore, by \cite[Lemma 3.1]{CF1} one has $\Supp_R(M)=\Supp_R(\uhom_R(M,C))$. Hence, we have the following equalities:
\[\begin{array}{rl}
\dim_R(\H_{-t}(\uhom_R(M,C)))&=\dim_R(\Sigma^{t}\uhom_R(M,C))\\
&=\dim_R(\uhom_R(M,C))-t\\
&=\sup\{\dim R/\fp-\inf\uhom_R(M,C)_{\fp} ~|~\fp\in\Supp_R(\uhom_R(M,C))\}-t\\
&=\sup\{\dim R/\fp-\inf\uhom_R(M,C)_{\fp} ~|~\fp\in\Supp_R(M)\}-t\\
&=-\inf(\uhom_R(M,C))-t\\
&=0
\end{array}\]
Therefore, $\dim_R(\H_{-t}(\uhom_R(M,C)))=0$ and also by \cite[Lemma 3.12]{CF1} one has:
\[\begin{array}{rl}
\GCd_R(\H_{-t}(\uhom_R(M,C)))&=\GCd_R(\Sigma^{t}\uhom_R(M,C))\\
&=\GCd_R(\uhom_R(M,C))+t\\
&=\inf C-\inf\uhom_R(\uhom_R(M,C),C)+t\\
&=\inf C-\inf M+t\\
&=0
\end{array}\]
Hence, by a similar argument as in the above, on has $l_{R}(\H_{-\depth_R C}(\uhom_R(\H_{-t}(\uhom_R(M,C)),C)))\leq l_R(\H_{-t}(\uhom_R(M,C))-s$.
Therefore, one has:
\[\begin{array}{rl}
l_R(M)&=l_R(\H_0(\uhom_R(\uhom_R(M,C),C))\\
&=l_R(\H_{-t+t}(\uhom_R(\uhom_R(M,C),C))\\
&=l_R(\H_{-t}(\Sigma^{-t}\uhom_R(\uhom_R(M,C),C))\\
&=l_R(\H_{-t}(\uhom_R(\Sigma^{t}\uhom_R(M,C),C))\\
&=l_R(\H_{-t}(\uhom_R(\H_{-t}(\uhom_R(M,C)),C))\\
&\leq l_R(\H_{-t}(\uhom_R(M,C))-s\\
&\leq l_R(M)-2s.
\end{array}\]
Therefore, $s=0$ and so $\H_{-\depth_R(C)-1}(\uhom_{R}(k,C))=0$. Now, by the exact sequence (3.6) one can deduce that $\H_{-\depth_R(C)-1}(\uhom_{R}(M_2,C))=\dots =\H_{-\depth_R(C)-1}(\uhom_{R}(M_{n-1},C))=0$. Hence one has $\H_{-\depth_R(C)-2}(\uhom_{R}(k,C))\cong\H_{-\depth_R(C)-1}(\uhom_{R}(M_{n-1},C))$, and thus $\H_{-\depth_R(C)-2}(\uhom_{R}(k,C))=0$. Therefore, by similar argument, one can deduce that $\H_{-j}(\uhom_{R}(k,C))=0$ for all $j>\depth_R(C)=-\inf C$, as required.
\end{proof}
\end{thm}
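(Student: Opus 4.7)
The plan is to leverage Theorem~\ref{Anni}: since $\r_R(C)=1$ is given, $C$ will be dualizing as soon as it is Cohen-Macaulay, so the target reduces to the single statement $\depth_R(C)=\dim_R(C)$. I would then make a succession of reductions on the module $M$ so that Cohen-Macaulayness of $C$ can be tested in a particularly controlled setting.

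First, I would cut $M$ down to an Artinian module: replacing $M$ by $M/\mathbf{x}M$ for a maximal $M$-sequence $\mathbf{x}$ yields an Artinian Cohen-Macaulay module of finite $G_{C}$-dimension by Lemma~\ref{Lemma}. Then, since $\Ann_R(M)$ is $\fm$-primary, I would pick a maximal $R$-sequence $\mathbf{y}\subset\Ann_R(M)$ and base-change along $\bar R:=R/\mathbf{y}R$. The semidualizing base-change machinery from \cite{CF1} provides that $D:=\uhom_R(\bar R,C)$ is semidualizing for $\bar R$ with $\r_{\bar R}(D)=\r_R(C)=1$, that $D$ is Cohen-Macaulay over $\bar R$ precisely when $C$ is Cohen-Macaulay over $R$, and that $M$ has finite $G_D$-dimension over $\bar R$. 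Replacing $(R,C,M)$ with $(\bar R,D,M)$, we may assume $\depth R=0$, $M$ is a zero-dimensional Cohen-Macaulay $R$-module, and Lemma~\ref{Lem2.2}(1) forces $\GCd_R(M)=0$.

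In this reduced setting, Cohen-Macaulayness of $C$ is equivalent to $C$ having finite injective dimension, which in turn reduces to the vanishing $\H_{-j}(\uhom_R(k,C))=0$ for every $j>-\inf C=\depth_R(C)$. Since $\r_R(C)=1$, the degree $j=\depth_R(C)$ already contributes a copy of $k$, so only strictly larger $j$ need to be eliminated. To kill those, I would filter $M$ by a composition series $0=M_0\subset M_1\subset\cdots\subset M_n=M$ with simple quotients $k$, apply $\uhom_R(-,C)$, and track the resulting length information inductively. Setting $s:=l_R(\H_{-\depth_R(C)-1}(\uhom_R(k,C)))$, the long exact sequences yield a bound of the form $l_R(\H_{-\depth_R(C)}(\uhom_R(M,C)))\le l_R(M)-s$. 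Since $\GCd_R(M)=0$ and $M$ is a zero-dimensional Cohen-Macaulay module, Lemma~\ref{Lem2.2} forces $\uhom_R(M,C)$ to sit in the single degree $-\depth_R(C)$ and itself to satisfy $\GCd_R=0$, so biduality $M\simeq\uhom_R(\uhom_R(M,C),C)$ lets me apply the same bound a second time to the dualised object and conclude $l_R(M)\le l_R(M)-2s$. This forces $s=0$, and an upward induction using the same composition-series exact sequences then propagates the vanishing to every $j>-\inf C$.

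The delicate point will be the last paragraph. The length-counting argument only works once one has pinned down $\uhom_R(M,C)$ as living in a single homological degree whose length is computable, and this requires combining several inputs — the $\sup=\inf$ collapse for $\uhom_R(M,C)$, the identification $\Supp_R(M)=\Supp_R(\uhom_R(M,C))$, and the behaviour of $\GCd$ under shifts — in a way that does not secretly presuppose Cohen-Macaulayness of $C$. Arranging these ingredients so that biduality can be applied twice and yield the clean inequality $l_R(M)\le l_R(M)-2s$ is where the real technical work lies.
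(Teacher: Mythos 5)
Your proposal follows essentially the same route as the paper's own proof: the same two-stage reduction (first to an Artinian module via Lemma~\ref{Lemma}, then to $\depth R=0$ via $D=\uhom_R(R/\mathbf{y}R,C)$ and the base-change results of \cite{CF1}), followed by the same composition-series length count giving $l_R(M)\le l_R(M)-2s$ through biduality, hence $s=0$, and the same upward induction to kill the higher Bass numbers of $C$. The technical ingredients you flag as the delicate point (the $\sup=\inf$ collapse for $\uhom_R(M,C)$, the support identification, and the shift behaviour of $\GCd$) are exactly the ones the paper assembles there, so no substantive difference to report.
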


The following result, which is a consequence of the previous theorem, provides a characterization of dualizing modules of $R$ and a generalization of \cite[Theorem 2.3]{Tak}.
\begin{cor}\label{COOO}Let $(R,\fm)$ be a local ring and let $C$ be a semidualizing $R$-module. Then the following statements are equivalent:
\begin{itemize}
\item[(i)]{$C$ is dualizing}.
\item[(ii)]{$R$ admits a Cohen-Macaulay $R$-module of type 1 and finite $G_{C}$-dimension.}
\item[(iii)]{$R$ is a ring of type $\beta_0^R(C)$ admitting a Cohen-Macaulay $R$-module of finite $G_{C}$-dimension.}
\end{itemize}
\begin{proof}The implication (i)$\Longrightarrow$(ii) is clear and the implication (ii)$\Longrightarrow$(iii) follows from Lemma \ref{Th3}. For the implication (iii)$\Longrightarrow$(i), let $r_R(R)=\beta_{0}^R(C)$ and let $M$ be a Cohen-Macaulay $R$-module of finite $G_{C}$-dimension. Hence, in view of Lemma \ref{Th3} one has $\r_R(C)=1$, and so Theorem \ref{Maa} implies that $C$ is dualizing.
\end{proof}
\end{cor}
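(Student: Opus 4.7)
The plan is to establish the three implications $(i)\Rightarrow(ii)\Rightarrow(iii)\Rightarrow(i)$, leaning entirely on the two pillars already erected in this section: Lemma \ref{Th3}, which ties the existence of a type-one complex of finite $G_C$-dimension to the numerical equality $\r_R(R)=\beta_{\inf C}^R(C)$ and to $\r_R(C)=1$; and Theorem \ref{Maa}, which upgrades $\r_R(C)=1$ (together with the existence of \emph{any} Cohen-Macaulay $R$-module of finite $G_C$-dimension) to the conclusion that $C$ is dualizing.

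For $(i)\Rightarrow(ii)$ I would simply exhibit $M=C$. A dualizing module over a local ring is the unique Cohen-Macaulay module of type one up to isomorphism; alternatively, this follows directly from Theorem \ref{Anni} by viewing $C$ as a complex concentrated in degree~$0$. Moreover $C$ trivially satisfies $\GCd_R(C)=0$ since $C\in G_C(R)$ by construction. For $(ii)\Rightarrow(iii)$ I invoke Lemma \ref{Th3} with $Z:=M$: the hypothesis of condition~(i) of that lemma is precisely what (ii) supplies, so condition~(ii) of the lemma yields $\r_R(R)=\beta_{\inf C}^R(C)$. Since $C$ is concentrated in degree~$0$, $\inf C=0$, giving $\r_R(R)=\beta_0^R(C)$; and $M$ itself continues to serve as the required Cohen-Macaulay module of finite $G_C$-dimension.

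The only substantive implication is $(iii)\Rightarrow(i)$. I apply Lemma \ref{Th3} in the opposite direction: the hypothesis $\r_R(R)=\beta_0^R(C)$ is exactly condition~(ii) of that lemma (using $\inf C=0$), so the equivalence with condition~(iii) there delivers $\r_R(C)=1$. Combined with the remaining hypothesis---existence of a Cohen-Macaulay $R$-module of finite $G_C$-dimension---this is precisely the input required by Theorem \ref{Maa}, whose conclusion is that $C$ is dualizing, closing the cycle.

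The main intellectual obstacle is not in this corollary itself but in Theorem \ref{Maa}, which it consumes: there one first reduces to the Artinian case by killing a maximal $M$-sequence (where Lemma \ref{Lemma} controls $G_C$-dimension under such quotients), then descends to $\depth R=0$ by quotienting by a maximal $R$-sequence inside $\Ann_R(M)$ via the change-of-rings machinery of \cite[Propositions 6.4, 6.1, Theorem 6.5]{CF1}, and finally runs a composition-series length comparison to force $\mu_R^{j}(\fm,C)=0$ for all $j>-\inf C$. Relative to that work, the present corollary is a pure assembly of previously established pieces and should fit in a few lines.
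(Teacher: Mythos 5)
Your proposal is correct and follows essentially the same route as the paper: (i)$\Rightarrow$(ii) by taking $M=C$ (via Theorem \ref{Anni}), (ii)$\Rightarrow$(iii) and the extraction of $\r_R(C)=1$ from $\r_R(R)=\beta_0^R(C)$ via Lemma \ref{Th3} with $\inf C=0$, and the final step via Theorem \ref{Maa}. No substantive differences.
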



Let $C$ be a semidualizing complex and $X$ be a homologically finitely generated and bounded complex. Then we define, $\gr_C(X)$, garde of $X$ with respect to $C$ as follow: $\gr_C (X):=\inf\{ i | \Ext_R^i(X,C)\neq 0\}=-\sup\uhom_R(X,C)$. Now, we provide the following result which is a generalization of \cite[Corollary 4.4]{DFT}.
\begin{prop}{\label{Ma}}Let $(R,\fm)$ be a local ring and let $C$ be a semidualizing $R$-complex. Then the following statements are equivalent:
\begin{itemize}
\item[(i)]{$C$ is Cohen-Macaulay}.
\item[(ii)]{$R$ admits a Cohen-Macaulay complex $X$ with finite $G_{C}$-dimension such that $\dim_R(X)=\dim_R(C)-\gr_C(X)$.}
\end{itemize}
\begin{proof}For the implication (i)$\Longrightarrow$(ii), first notice that by definition of semidualizing complex, $\GCd_R C$ is finite and $\gr_C(C)=0$. Therefore, by setting  $X=C$, there is nothing to prove. The implication (ii)$\Longrightarrow$(i) follows from \cite[Lemma 3.1]{CF1} and \cite[Theorem 3.3(a)]{CF1} which say that $\sup\uhom_R(X,C)=\inf\uhom_R(X,C)=\depth_R(X)-\depth_R(C).$
\end{proof}
\end{prop}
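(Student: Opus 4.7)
The plan is to exploit the relation between $\inf \uhom_R(X,C)$, $\sup \uhom_R(X,C)$ and the numerical invariants of $X$ and $C$, together with the amplitude collapse supplied by \cite[Theorem 3.3(a)]{CF1} that is already invoked in the proof of Theorem \ref{Maa}.

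For the direction (i)$\Longrightarrow$(ii), I would take $X := C$ as the witness. Since $C$ is semidualizing we have $\GCd_R(C)=0$; Cohen-Macaulayness of $X$ is the hypothesis; and from $\uhom_R(C,C)\simeq R$ one reads off $\sup\uhom_R(C,C)=0$, whence $\gr_C(C)=0$. The required equality $\dim_R(X)=\dim_R(C)-\gr_C(X)$ then reduces to the tautology $\dim_R(C)=\dim_R(C)$.

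For the converse (ii)$\Longrightarrow$(i) the strategy is to show $\depth_R(C)=\dim_R(C)$ by a short chain of substitutions. From Lemma \ref{Lem2.2}(3), since $\GCd_R(X)<\infty$, we get
\[
\inf\uhom_R(X,C)=\depth_R(X)-\depth_R(C).
\]
From \cite[Theorem 3.3(a)]{CF1}, whose hypotheses (finite $G_C$-dimension together with Cohen-Macaulayness of $X$) are exactly what we have, we obtain $\sup\uhom_R(X,C)=\inf\uhom_R(X,C)$. Combining with the very definition $\sup\uhom_R(X,C)=-\gr_C(X)$ yields
\[
-\gr_C(X)=\depth_R(X)-\depth_R(C).
\]
Now use $\depth_R(X)=\dim_R(X)$ from the Cohen-Macaulayness of $X$ and substitute the assumed dimension equality $\dim_R(X)=\dim_R(C)-\gr_C(X)$; cancelling $\gr_C(X)$ on both sides delivers $\depth_R(C)=\dim_R(C)$, so $C$ is Cohen-Macaulay.

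I do not foresee any real obstacle: the argument is essentially bookkeeping with three invariants. The only content being imported from elsewhere is the amplitude-collapse statement \cite[Theorem 3.3(a)]{CF1}, and in order to cite it safely one should double-check that ``Cohen-Macaulay $R$-complex of finite $G_C$-dimension'' is precisely the context in which $\uhom_R(X,C)$ is shown there to be concentrated in a single homological degree; this is the same application already made in the course of proving Theorem \ref{Maa}, so no new technical input is needed.
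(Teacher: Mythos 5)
Your proof is correct and takes essentially the same route as the paper: set $X=C$ for (i)$\Longrightarrow$(ii), and for (ii)$\Longrightarrow$(i) combine $\inf\uhom_R(X,C)=\depth_R(X)-\depth_R(C)$ with the amplitude collapse $\sup\uhom_R(X,C)=\inf\uhom_R(X,C)$ from \cite[Theorem 3.3(a)]{CF1} and the definition $\gr_C(X)=-\sup\uhom_R(X,C)$. Your write-up simply makes explicit the bookkeeping that the paper's one-line argument leaves implicit.
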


\begin{thm}{\label{Maaaa}}Let $(R,\fm)$ be a local ring and let $C$ be a semidualizing $R$-complex and consider the following statements:
\begin{itemize}
\item[(i)]{$C$ is dualizing}.
\item[(ii)]{$R$ admits a Cohen-Macaulay complex of type 1 with finite $G_{C}$-dimension.}
\item[(iii)]{$\r_R(R)=\beta_{\inf C}^R(C)$ and there exists a Cohen-Macaulay $R$-complex $X$ of finite $G_{C}$-dimension.}
\item[(iv)]{$\r_R(C)=1$ and there exists a Cohen-Macaulay $R$-complex $X$ of finite $G_{C}$-dimension.}
\end{itemize}
Then the implications \emph{(i)$\Longrightarrow$(ii), (ii)$\Longrightarrow$(iii) and (iii)$\Longleftrightarrow$(iv)} hold and the implication \emph{(iv)$\Longrightarrow$(i)} holds in the each of the following cases:
\begin{itemize}
\item[(1)]{$\amp X=0$.}
\item[(2)]{$X$ is a dualizing complex.}
\item[(3)]{ $X$ satisfies the condition $\dim_R(X)=\dim_R(C)-\gr_C(X)$.}
\end{itemize}
\begin{proof}
The implications (i)$\Longrightarrow$(ii) follows from Theorem \ref{Anni} and the fact that $\GCd_{R}C$ is finite. The implications (ii)$\Longrightarrow$(iii) and  (iii)$\Longleftrightarrow$(iv) follow from Lemma \ref{Th3}. Now assume that \emph{$\r_R(C)=1$} and $X$ is a Cohen-Macaulay $R$-complex with finite $G_C$-dimension. Hence, the implication (iv)$\Longrightarrow$(i) in the case (1) follows from Theorem \ref{Maa}, and in the case (2) one can see \cite[Theorem 8.2($ii^{,}$)]{CF1}. Also, in case (3), it follows from Proposition \ref{Ma} and Theorem \ref{Anni}.
\end{proof}
\end{thm}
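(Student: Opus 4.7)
The proof is essentially an assembly job, collecting the implications from the earlier results of the paper. The plan is to handle each implication by citing or lightly extending one of the lemmas already proved.

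For the implication \emph{(i)$\Longrightarrow$(ii)}, I would simply take $C$ itself as the witness complex: by Theorem \ref{Anni}, a dualizing complex is Cohen-Macaulay semidualizing of type one, and $\GCd_R(C)=0$ is finite by the very definition of semidualizing (the biduality $\delta_C^C$ is an isomorphism, so $C\in\mathcal{R}(C)$). For \emph{(ii)$\Longrightarrow$(iii)}, note that any Cohen-Macaulay $R$-complex of type one with finite $G_C$-dimension fulfills condition (i) of Lemma \ref{Th3}, so condition (ii) of that lemma, namely $\r_R(R)=\beta_{\inf C}^R(C)$, holds, and the same complex serves as the required $X$. The equivalence \emph{(iii)$\Longleftrightarrow$(iv)} is then immediate from the equivalence of (ii) and (iii) in Lemma \ref{Th3}.

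It remains to carry out \emph{(iv)$\Longrightarrow$(i)} under each of the three additional hypotheses. In case \textbf{(1)}, $\amp X=0$ means $X\simeq\Sigma^{s}M$ for an integer $s$ and a finitely generated Cohen-Macaulay $R$-module $M$; since the class of complexes with finite $G_C$-dimension is closed under shifts (by \cite[Lemma 3.12]{CF1}), $\GCd_R(M)<\infty$, so Theorem \ref{Maa} applied to $M$ (with $\r_R(C)=1$) yields that $C$ is dualizing. In case \textbf{(2)}, $X$ is dualizing and has finite $G_C$-dimension; then \cite[Theorem 8.2(ii$'$)]{CF1} forces $C$ to be dualizing as well. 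In case \textbf{(3)}, the hypothesis $\dim_R(X)=\dim_R(C)-\gr_C(X)$ together with the existence of a Cohen-Macaulay $X$ of finite $G_C$-dimension is exactly condition (ii) of Proposition \ref{Ma}, so $C$ is Cohen-Macaulay; combined with $\r_R(C)=1$, Theorem \ref{Anni} then delivers that $C$ is dualizing.

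There is no real obstacle here: the heavy lifting has already been done in Theorem \ref{Anni}, Lemma \ref{Th3}, Theorem \ref{Maa}, and Proposition \ref{Ma}. The only point that requires a second of care is case (1): one must verify that passing from $X$ to the underlying module via a shift preserves both the Cohen-Macaulay property and finiteness of $G_C$-dimension, which is a direct consequence of the shift formulas for $\depth_R$, $\dim_R$, and $\GCd_R$ recorded in Section 2 and in \cite[Lemma 3.12]{CF1}. Once those shift identities are invoked, the reduction to Theorem \ref{Maa} is immediate, and the theorem falls out.
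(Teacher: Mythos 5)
Your proposal is correct and follows essentially the same route as the paper: each implication is reduced to Theorem \ref{Anni}, Lemma \ref{Th3}, Theorem \ref{Maa}, Proposition \ref{Ma}, or \cite[Theorem 8.2(ii$'$)]{CF1} in exactly the way the paper does. The only difference is that you spell out the shift reduction in case (1) (passing from a complex with $\amp X=0$ to its underlying module while preserving Cohen-Macaulayness and finiteness of $G_C$-dimension), a detail the paper leaves implicit; this is a welcome clarification but not a different argument.
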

Here we should notice that, in general, using Theorem \ref{Anni} and Lemma \ref{Th3} implies that to prove the implication (iv)$\Longrightarrow$(i), in the above theorem, it is enough to show that $C$ is Cohen-Macaulay. Furthermore, in view of \cite[Proposition 3.15]{CF1} for an $R$-complex $X\in\mathrm{D}_{\Box}^f(R)$ one has the inequality $\GCd_R(X)\leq\pd_R(X)$ which is equality if $\pd_R(X)$ is finite. On the other hand, by \cite[Proposition 3.10]{CF1}, $R$ is Cohen-Macaulay whenever there exists a Cohen-Macaulay $R$-complex of finite projective dimension. So, it is natural to ask the following question.

\begin{ques}Let $(R,\fm)$ be a local ring and let $C$ be a semidualizing $R$-complex. If $R$ admits a Cohen-Macaulay complex with a positive amplitude of finite $G_{C}$-dimension and $r_R(C)=1$, then can we say that $C$ is Cohen-Macaulay?
\end{ques}

From \cite{RHart} we know that, up to isomorphism and shift, a dualizing complex $D$ is the only semidualizing complex of finite injective dimension. Also, in \cite[Proposition 8.3]{CF1}, it has been proven that up to isomorphism and shift $R$ is the unique semidualizing complex in $\mathcal{R}_R(R)$, in particular, it is the only semidualizing complex of finite projective dimension. So, the following result can be considered as an improvement of \cite[Proposition 8.3]{CF1}.
\begin{thm}\label{fai}Let $(R,\fm, k)$ be local ring and $C$ be a semidualizing $R$-complex. Then the following statements are equivalent:
 \begin{itemize}
\item[(i)]{$C\sim R$}.
\item[(ii)]{$k\in\mathcal{A}_C(R)$}.
\item[(iii)]There exists a Cohen-Macaulay $R$-module $M$ of type one such that $M\in\mathcal{A}_C(R)$.
\end{itemize}
\begin{proof}First notice that for an $R$-complex $X\in\mathrm{D}_{\Box}^f(R)$ one can deduce that $X\in\mathcal{A}_C(R)$ if and only if $X\otimes_R\widehat{R}\in\mathcal{A}_{C\otimes_R\widehat{R}}(\widehat{R})$. Furthermore, there exist the equalities $\dim_R(X)=\dim_{\widehat{R}}(X\otimes_R\widehat{R})$, $\depth_R(X)=\depth_{\widehat{R}}(X\otimes_R\widehat{R})$, $\r_R(X)=\r_{\widehat{R}}(X\otimes_R\widehat{R})$ and $\gr_C (X)=\gr_{C\otimes_R\widehat{R}}(X\otimes_R\widehat{R})$. Therefore, we may and do assume that $R$ is a complete local ring and so has a dualizing complex as $D$. Now, the implication (i)$\Longrightarrow$(ii) follows from the fact that $\mathcal{A}_R(R)=\mathrm{D}_{\Box}(R)$, see \cite[Remmark 4.3]{CF1}. The implication (ii)$\Longrightarrow$(iii) is clear because $k$ is a Cohen-Macaulay $R$-module of type one. For the implication (iii)$\Longrightarrow$(i), first we should notice that in view of \cite[Theorem 4.7]{CF1} one has $\uhom_R(C,D)$ is a semidualizing complex and $\GCad_R(M)$ is finite. Hence, by Lemma \ref{Th3}, we conclude that $\uhom_R(C,D)$ has type one and so it is a dualizing complex of $R$ by Theorem \ref{Maa}. Therefore, there exists an integer $n$ such that $\uhom_R(C,D)\simeq\Sigma^nD$. Hence, one has the following isomorphism:
\[\begin{array}{rl}
C&\simeq\uhom_R(\uhom_R(C,D),D)\\
&\simeq \uhom_R(\Sigma^nD,D)\\
&\simeq \Sigma^{-n}\uhom_R(D,D)\\
&\simeq \Sigma^{-n}R,
\end{array}\]
as required.

\end{proof}
\end{thm}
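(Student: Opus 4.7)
The overall plan is to reduce to the complete case by faithfully flat base change, settle the two easy implications directly from the definitions, and then prove (iii)$\Longrightarrow$(i) by pairing the given module $M$ with the semidualizing complex $C':=\uhom_R(C,D)$ obtained from a dualizing complex $D$, so that the type-one criterion for dualizing already proven in Theorem~\ref{Maa} becomes applicable. For the reduction I would invoke \cite[Proposition~5.8]{CF1} together with the known invariance of depth, dimension, and type under $R\to\widehat R$, allowing me to assume that $R$ is complete and hence admits a dualizing complex $D$. The implication (i)$\Longrightarrow$(ii) then falls out because $C\simeq\Sigma^n R$ yields $C\utp_R X\simeq\Sigma^n X$ and makes $\gamma_X^C$ an isomorphism for every $X\in\mathrm{D}_{\Box}(R)$; in particular $k\in\mathcal{A}_C(R)$. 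The implication (ii)$\Longrightarrow$(iii) is immediate upon taking $M=k$, which is Cohen-Macaulay of type one.

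For the heart of the theorem, the implication (iii)$\Longrightarrow$(i), I would set $C':=\uhom_R(C,D)$. By \cite[Theorem~4.7]{CF1}, $C'$ is again semidualizing and the hypothesis $M\in\mathcal{A}_C(R)$ translates into the finiteness of $\GCad_R(M)$. Since $M$ is Cohen-Macaulay of type one with finite $G_{C'}$-dimension, Lemma~\ref{Th3} (applied with $C$ replaced by $C'$ and $Z=M$) forces $\r_R(C')=1$. Theorem~\ref{Maa} now applies to $C'$ with witness $M$ and concludes that $C'$ is a dualizing $R$-complex. Uniqueness of dualizing complexes up to shift then gives an integer $n$ with $\uhom_R(C,D)\simeq\Sigma^n D$, and the $D$-reflexivity of $C$ yields
\[ C\simeq\uhom_R(\uhom_R(C,D),D)\simeq\uhom_R(\Sigma^n D,D)\simeq\Sigma^{-n}\uhom_R(D,D)\simeq\Sigma^{-n}R, \]
so $C\sim R$.

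The principal obstacle is the transfer step just outlined: one must promote the Cohen-Macaulay type-one property from a module in $\mathcal{A}_C(R)$ to the associated semidualizing complex $\uhom_R(C,D)$, so that the dualizing conclusion of Theorem~\ref{Maa} can be harvested. This depends on both Lemma~\ref{Th3} (to detect $\r_R(C')=1$ from data about $M$) and Theorem~\ref{Maa} (the main result giving dualizing from a type-one Cohen-Macaulay witness); without these bridges, the Auslander-class hypothesis on $M$ alone is too weak to pin down the structure of $C$.
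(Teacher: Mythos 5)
Your proposal is correct and follows essentially the same route as the paper: reduction to the complete case, the two easy implications, and for (iii)$\Longrightarrow$(i) the passage to $C'=\uhom_R(C,D)$ via \cite[Theorem 4.7]{CF1}, followed by Lemma \ref{Th3} to get $\r_R(C')=1$, Theorem \ref{Maa} to conclude $C'$ is dualizing, and the biduality computation to recover $C\sim R$. No substantive differences.
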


Acknowledgments. The author thanks Prof. Kamaran Divaani-Aazar for having a useful conversation with him during the preparation of the manuscript.

\end{document}